\newtheorem{theorem}{Theorem}[section]
\newtheorem{lemma}[theorem]{Lemma}
\newtheorem{proposition}[theorem]{Proposition}
\newtheorem*{property-P}{Property P}
\theoremstyle{definition}
\newtheorem{definition}[theorem]{Definition}
\newtheorem{remark}[theorem]{Remark}
\newtheorem{example}[theorem]{Example}
\def\comp{\raisebox{0.2mm}{\ensuremath{\scriptstyle\circ}}}
\renewcommand{\to}{\longrightarrow}
\newcommand{\Ker}{\ensuremath{\mathsf{Ker\,}}}
\newcommand{\Cc}{\ensuremath{\mathbb{C}}}
\newcommand{\Fc}{\ensuremath{\mathcal{F}}}
\newcommand{\Ac}{\ensuremath{\mathcal{A}}}
\newcommand{\Xc}{\ensuremath{\mathbb{X}}}
\newcommand{\Eq}{\ensuremath{\mathsf{Eq(\Ac)}}}
\newcommand{\Ext}{\ensuremath{\mathsf{Ext}}}
\newcommand{\Cov}{\ensuremath{\mathsf{Cov}}}
\newcommand{\Spl}{\ensuremath{\mathsf{Spl}}}
\newcommand{\Arr}{\ensuremath{\mathsf{Arr}}}
\newcommand{\CExt}{\ensuremath{\mathsf{CExt}}}
\newcommand{\Ec}{\ensuremath{\mathcal{E}}}
\newcommand{\E}{\ensuremath{\mathcal{E}}}
\newcommand{\Triv}{\ensuremath{\mathsf{Triv}}}
\newbox\skewpullbackbox
\newbox\ksewpullbackbox
\newbox\pullbackbox
\newbox\pushoutbox
\begin{document}

\title{Higher central extensions in Mal'tsev categories\thanks{Dedicated to George Janelidze on the occasion of his sixtieth birthday.\\The author is post-doctoral fellow with Fonds voor Wetenschappelijk Onderzoek (FWO-Vlaanderen).}
}
\thanks{Dedicated to George Janelidze on the occasion of his sixtieth birthday}

\author{Tomas Everaert}
%\institute{T. Everaert \at
%              Vakgroep Wiskunde, Vrije Universiteit Brussel, Pleinlaan 2, 1050 Brussel, Belgium \\
%              \email{teveraer@vub.ac.be}           %  \\
%%             \emph{Present address:} of F. Author  %  if needed
%           \and 
%           Institut de recherche en mathŽmatique et physique, UniversitŽ catholique de Louvain, chemin du cyclotron 2 bte L7.01.02,
%1348 Louvain-la-Neuve, Belgium
%

  \address{
Vakgroep Wiskunde \\
Vrije Universiteit Brussel \\
 Department of Mathematics  \\
 Pleinlaan 2\\
1050 Brussel \\
 Belgium.
 }
 \address{
    Institut de recherche en mathŽmatique et physique, UniversitŽ catholique de Louvain, chemin du cyclotron 2 bte L7.01.02,
1348 Louvain-la-Neuve, Belgium}

\email{teveraer@vub.ac.be}

 \date{\today}

\maketitle

\begin{abstract}
Higher dimensional central extensions of groups were introduced by G.~Janelidze as particular instances of the abstract notion of covering morphism from categorical Galois theory. More recently, the notion has been extended to and studied in arbitrary semi-abelian categories.  In this article, we further extend the scope to exact Mal'tsev categories and beyond.
\end{abstract}
\pagestyle{myheadings}{\markboth{TOMAS EVERAERT},\markright{}}

\section{Introduction}

A \emph{Galois structure} $\Gamma=(\Cc,\Xc,H,I,\eta,\E)$ \cite{Janelidze:pure,Janelidze-Kelly:reflectiveness} consists of a category $\Cc$, a full replete reflective subcategory $\Xc$ of $\Cc$ with inclusion functor $H\colon \Xc\to \Cc$ \footnote{The right adjoint $H\colon \Xc\to \Cc$ is usually not required to be fully faithful, in which case it is necessary to consider moreover a class $\Fc$ of morphisms in $\Xc$. Here we make the same restriction as in \cite{Janelidze-Kelly:reflectiveness}.}, left adjoint $I\colon \Cc\to\Xc$ and reflection unit $\eta_A\colon A\to HI(A)$ (for $A\in\Cc$), and a class $\Ec$ of morphisms in $\Cc$ such that
\begin{itemize}
\item[(E1)] $\E$ contains all isomorphisms;
\item[(E2)] $\E$ is pullback-stable, in the following sense: the pullback of a morphism in $\E$ along any morphism in $\Cc$ \emph{exists} and lies in $\E$;
\item[(E3)] $\E$ is closed under composition;
\item[(G)] $HI(\Ec)\subseteq \Ec$.
\end{itemize} 
Given a fixed Galois structure $\Gamma=(\Cc,\Xc,H,I,\eta,\E)$, and for any object $B\in \Cc$, we shall use the notation $(\Cc\downarrow B)$ for the full subcategory of the slice category $\Cc/B$ containing those $f\in \Cc/B$ which lie in the class $\Ec$. We shall often write $(A,f)$ instead of just $f$ in order to denote an object $f\colon A\to B$ of $(\Cc\downarrow B)$. A \emph{monadic extension} (or \emph{effective $\E$-descent morphism}) is an object $(E,p)\in (\Cc\downarrow B)$ such that the pullback functor $p^*\colon (\Cc\downarrow B)\to (\Cc\downarrow E)$ is monadic. A \emph{trivial covering} of $B$ is an object $(A,f)\in(\Cc\downarrow B)$ such that the induced commutative square 
\[
\xymatrix{
A \ar[r]^-{\eta_A} \ar[d]_f & HI(A)\ar[d]^{HI(f)}\\
B\ar[r]_-{\eta_B}& HI(B)}
\]
is a pullback. If $(E,p)\in(\Cc\downarrow B)$ is a monadic extension and $(A,f)\in(\Cc\downarrow B)$, then $(A,f)$ is said to be \emph{split} by $(E,p)$ if $p^*(A,f)$ is a trivial covering of $E$. The full subcategory of $(\Cc\downarrow B)$ containing those $(A,f)\in (\Cc\downarrow B)$ that are split by $(E,p)$ is denoted by $\Spl_{\Gamma}(E,p)$. A \emph{covering} of $B$ is an object $(A,f)\in (\Cc\downarrow B)$ which lies in $\Spl_{\Gamma}(E,p)$ for \emph{some} monadic extension $(E,p)$. The full subcategory of $(\Cc\downarrow B)$ of all coverings of $B$ is denoted by $\Cov_{\Gamma}(B)$, and the full subcategory of the arrow category $\Arr(\Cc)$ containing the coverings of \emph{every} $B\in\Cc$ by $\Cov_{\Gamma}(\Cc)$. Finally, a \emph{normal extension} is a monadic extension $(E,p)$ which lies in $\Spl_{\Gamma}(E,p)$. 
 
We are particularly interested in the following example of a Galois structure $\Gamma=(\Cc,\Xc,H,I,\eta,\E)$ \cite{Janelidze:pure}: $\Cc$ is the variety of groups, $\Xc$ is the variety of abelian groups, $H\colon \Xc\to\Cc$ is the inclusion functor hence $I\colon \Cc\to \Xc$ is the abelianisation functor and $\eta_A$ is the quotient map $A\to A/[A,A]$, and $\Ec$ is the class of surjective homomorphisms. In this case, every $p\in\Ec$ is a monadic extension.  A trivial covering is a surjective homomorphism $f\colon A\to B$ whose restriction to the commutator subgroups $[A,A]\to [B,B]$ is an isomorphism. The coverings are precisely the central extensions: surjective homomorphisms $f\colon A\to B$ whose kernel $\Ker(f)$ is contained in the center of $A$. Normal extensions and coverings coincide in this case. We shall denote the full subcategory of the arrow category $\Arr(\Cc)$ containing all surjective homomorphisms (=extensions) by $\Ext(\Cc)$. 

Of particular interest to us here is that the inclusion functor $H_1\colon \Cov_{\Gamma}(\Cc)\to \Ext(\Cc)$ has a left adjoint $I_1\colon \Ext(\Cc)\to  \Cov_{\Gamma}(\Cc)$; its value at a surjective homomorphism $f\colon A\to B$ is given by the induced homomorphism $A/[\Ker(f), A]\to B$  and the $f$-component of the reflection unit, $\eta^1_f\colon f\to H_1I_1(f)$, by the quotient map $A\to A/[\Ker(f), A]$. This fact allowed G.~Janelidze to introduce a notion of \emph{double central extension} of groups in \cite{Janelidze:double}. These are defined as the coverings with respect to the Galois structure $\Gamma_1=(\Cc_1,\Xc_1,H_1,I_1,\eta^1,\E^1)$, where $\Cc_1=\Ext(\Cc)$, $\Xc_1=\Cov_{\Gamma}(\Cc)$, $H_1$, $I_1$ and $\eta^1$ are as above, and $\Ec^1$ is a suitably defined class of morphisms in $\Ext(\Cc)$.

For an arbitrary Galois structure $\Gamma=(\Cc,\Xc,H,I,\eta,\E)$ it is not always true that every morphism $f\in \E$ admits a reflection in $\Cov_{\Gamma}(\Cc)$, however this is the case for  many important examples.  A comprehensive study of conditions on $\Gamma$ under which ``coverings are reflective'' can be found in  \cite{Janelidze-Kelly:reflectiveness}. For the example of groups considered here, we have an even stronger property: not only do we have a left adjoint for  $H_1\colon \Cov_{\Gamma}(\Cc)\to \Ext(\Cc)$, which gives rise to the Galois structure $\Gamma_1$ and a notion of double central extension, but now also the inclusion functor $H_2\colon \Cov_{\Gamma_1}(\Cc_1)\to \Ext(\Cc_1)$ admits a left adjoint. (Here we have written $\Ext(\Cc_1)$ for the full subcategory of $\Arr(\Cc_1)$ determined by the class $\E^1$.) This, in its turn, yields yet another Galois structure $\Gamma_2=(\Cc_2,\Xc_2,H_2,I_2,\eta^2,\E^2)$ with $\Cc_2=\Ext(\Cc_1)$, $\Xc_2=\Cov_{\Gamma_1}(\Cc_1)$ and $\Ec^2$ a suitably defined class of morphisms in $\Ext(\Cc_1)$, which now allows us to define \emph{triple central extensions}, and so on.  More specifically, as was proved by G.~Janelidze and presented in his talk  \cite{Janelidze:talk}, we have an infinite chain of  Galois structures $\Gamma_n=(\Cc_n,\Xc_n,H_n,I_n,\eta^n,\E^n)$ such that for each $n\geq 1$,  $\Cc_n$ is the full subcategory of $\Arr(\Cc_{n-1})$ determined by $\E^{n-1}$, and $\Xc_n$ consists exactly of the coverings with respect to $\Gamma_{n-1}$. (Here we assumed that $\Cc_0=\Cc$, $\E^0=\E$ and $\Gamma_0=\Gamma$.) Let us also mention here that, just as for the case $n=0$, we have for each $n\geq 1$ that every morphism in $\E^n$ is a monadic extension, and that every covering with respect to $\Gamma_n$ is a normal extension. 

The particular Galois structure $\Gamma$ considered above is, in fact,  the prototypical example of a large class of Galois structures studied in \cite{Janelidze-Kelly:central-extensions}: namely, those $\Gamma=(\Cc,\Xc,H,I,\eta,\E)$ which are \emph{admissible} in the sense explained below, for which $\Cc$ is an exact category in the sense of \cite{Barr}, $H\colon \Xc\to\Cc$ is the inclusion functor into $\Cc$ of a Birkhoff subcategory $\Xc$ (=a full reflective subcategory closed under subobjects and regular quotient objects) and $\E$ is the class of regular epimorphisms in $\Cc$. The exactness of $\Cc$ implies, in particular, that every regular epimorphism is a monadic extension. If $\Cc$ is, moreover, \emph{Mal'tsev}  \cite{Carboni-Kelly-Pedicchio,Carboni-Lambek-Pedicchio} (=the permutability condition $RS=SR$ is satisfied for every pair of internal equivalence relations $R$ and $S$ on an object of $\Cc$) then every covering is a normal extension (see \cite{Janelidze-Kelly:central-extensions}, where the assumption on $\Cc$ was actually slightly weaker). In this case we moreover have that every Birkhoff subcategory of $\Cc$ is admissible.

The coverings with respect to such a $\Gamma$ were called \emph{central extensions} in \cite{Janelidze-Kelly:central-extensions}. This choice of terminology is motivated not only by the example of groups considered above, but also by the fact that when $\Cc$ is any variety of $\Omega$-groups then the coverings with respect to $\Gamma$ are precisely the central extensions studied by A.~Fr\"ohlich and others, defined with respect to the subvariety $\Xc$. Also, when $\Cc$ is, more generally, a Mal'tsev variety and $\Xc$ is the subvariety of abelian algebras in $\Cc$, then the coverings with respect to $\Gamma=(\Cc,\Xc,H,I,\eta,\E)$ are precisely the central extensions arising from commutator theory in universal algebra: namely, those surjective homomorphisms $f\colon A\to B$ for which the commutator $[A\times_BA, A\times A]$ of the kernel congruence $A\times_BA$ of $f$ with largest congruence $A\times A$ on $A$ is trivial (see \cite{Janelidze-Kelly:Maltsev}). 

Now let us turn our attention back to the infinite chain of Galois structures $\Gamma_n$ ($n\geq 0$) described above. Its existence is not specific to that particular $\Gamma$: in \cite{EGV} it was shown that whenever $\Cc$ is a semi-abelian category and $\Xc$ is a Birkhoff subcategory of $\Cc$, then the corresponding Galois structure $\Gamma=(\Cc,\Xc,H,I,\eta,\E)$ (where $\Ec$ is the class of regular epimorphisms in $\Cc$) induces a similar chain of Galois structures $\Gamma_n=(\Cc_n,\Xc_n,H_n,I_n,\eta^n,\E^n)$ ($n\geq 1$)), i.e.~we have for each $n\geq 1$ that  $\Cc_n$ is the full subcategory of $\Arr(\Cc_{n-1})$ determined by $\E^{n-1}$, and $\Xc_n$ consists exactly of the coverings with respect to $\Gamma_{n-1}$. Note that since any such $\Gamma$ is of the type studied in \cite{Janelidze-Kelly:central-extensions}, it makes sense to call the coverings with respect to $\Gamma_{n-1}$  \emph{$n$-fold central extensions}, as in the case of groups.

The aim of the present article is to further extend the latter result by showing the existence of  such a chain of ``higher order'' Galois structures under conditions on the Galois structure  $\Gamma=(\Cc,\Xc,H,I,\eta,\E)$ which are in particular satisfied when  $\Cc$ is exact Mal'tsev, $\Xc$ is a Birkhoff subcategory of $\Cc$ with inclusion functor $H\colon \Xc\to \Cc$ and $\Ec$ is the class of regular epimorphisms in $\Cc$. This includes the situation considered in \cite{EGV}, but we retrieve also the notion of double central extension considered in \cite{Gran-Rossi:double} and \cite{Everaert-VdL:Cahiers} in the exact Mal'tsev context, and extend it to higher dimensions. %We obtain many new examples as well, in particular since we are no longer restricted to choosing $\Ec$ to be the class of all regular epimorphisms in $\Cc$, as in \cite{EGV}. 
To obtain our result, it will be sufficient to show the existence of the Galois structure $\Gamma_1=(\Cc_1,\Xc, I_1, H_1, \eta^1,\E^1)$ and to prove that it satisfies the same conditions as imposed on $\Gamma$. The existence of the $\Gamma_n$ ($n\geq 2)$ will then simply follow by induction. Under our assumptions we shall moreover have for each $n\geq 1$ that every morphism in $\E^n$ is a monadic extension, and that every covering with respect to $\Gamma_n$ (i.e. every ($n+1$)-fold central extension) is a normal extension. 

To conclude this introduction, let us remark that the importance of higher dimensional central extensions lies foremost in their use in non-abelian homological algebra, and in particular in their connection to the Brown-Ellis-Hopf formulae for the homology of a group \cite{Brown-Ellis}. For more details, see for instance \cite{EverHopf,Everaert-Gran-nGroupoids,EGV,GVdL2,Janelidze:Hopf,RVdL}.

\section{Reflectiveness of coverings}\label{sectionreflection}

\emph{Throughout this section, we will fix a Galois structure $\Gamma=(\Cc,\Xc,H,I,\eta,\E)$.}
\newline

We use the notation $\Ext(\Cc)$ for the full subcategory of the arrow category $\Arr(\Cc)$ whose objects are the morphisms in $\Ec$. Note, however, that we do not, initially, require every $f\in\Ec$ to be a monadic extension.

We are interested in conditions on $\Gamma$ under which a left adjoint exists for the inclusion functor $\Cov_{\Gamma}(\Cc)\to \Ext(\Cc)$. A property which plays an important role in this is \emph{admissibility}, hence we begin by recalling what it means. 

As before, we use the notation $(\Cc\downarrow B)$ for the full subcategory of the slice category $\Cc/B$ containing those $f\in\Cc/B$ which lie in $\E$.  Also, we write  $(\Xc\downarrow Y)$ for  the full subcategory of the slice category $\Xc/Y$ containing those $(X, \varphi)\in\Xc/Y$ for which $H(\varphi)\in\E$. Since $H(I(\E))\subseteq \E$, the reflector $I\colon \Cc\to \Xc$ extends, for any $B\in\Cc$, to a functor $I^B\colon (\Cc\downarrow B)\to (\Xc\downarrow I(B))$ in an obvious way. $I^B$ has a right adjoint $H^B\colon (\Xc\downarrow I(B))\to (\Cc\downarrow B)$, sending an object $(X,\varphi)\in (\Xc\downarrow I(B))$ to $(A,f)\in (\Cc\downarrow B)$ defined via the pullback
\[
\xymatrix{
A \ar[r] \ar[d]_{f} & H(X) \ar[d]^{H(\varphi)}\\
B \ar[r]_-{\eta_B} & HI(B).}
\]  

\begin{definition}
The Galois structure $\Gamma$ is called \emph{admissible} when the functors $H^B\colon {(\Xc\downarrow I(B))}  \rightarrow {(\Cc\downarrow B)}$ are fully faithful. %or, equivalently, when the counit $\epsilon^B$  is an isomorphism, for every $B$. 
\end{definition}

Let us also recall the following
\begin{lemma}\label{trivialstable}\cite[Proposition~2.4]{Janelidze-Kelly:reflectiveness}\
If $\Gamma$ is admissible then $I\colon \Cc\to \Xc$ preserves those pullbacks 
\begin{equation}\label{pullback}\vcenter{
\xymatrix{
D \ar[r] \ar[d]_h & A \ar[d]^f\\
C \ar[r]_g & B}}
\end{equation}
for which $f$ is a trivial covering.
\end{lemma}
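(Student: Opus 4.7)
The plan is to use admissibility together with the fact that a trivial covering is, up to isomorphism, of the form $H^B(X,\varphi)$. Indeed, $(A,f)$ being a trivial covering of $B$ means precisely that the unit $(A,f)\to H^BI^B(A,f)$ is an isomorphism, so $A\cong B\times_{HI(B)}HI(A)$ with $f$ the first projection.

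Next, I would rewrite the pullback $D=C\times_B A$ by substituting this description of $A$ and pasting pullbacks. This yields
\[
D\cong C\times_B \bigl(B\times_{HI(B)}HI(A)\bigr)\cong C\times_{HI(B)}HI(A) \cong C\times_{HI(C)}\bigl(HI(C)\times_{HI(B)}HI(A)\bigr),
\]
where the last step uses the factorisation $C\to HI(C)\to HI(B)$ of $\eta_B\comp g$. Since $H\colon\Xc\to\Cc$ is a right adjoint it preserves pullbacks, so $HI(C)\times_{HI(B)}HI(A)\cong H(Y)$ with $Y=I(C)\times_{I(B)}I(A)$, the latter pullback being computed in $\Xc$. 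Writing $\varphi\colon Y\to I(C)$ for the first projection, we have $H(\varphi)\in\E$: by axiom (G), $HI(f)\in\E$, and by (E2) the pullback $H(\varphi)$ of $HI(f)$ along $HI(g)$ again belongs to $\E$. Thus $(Y,\varphi)\in(\Xc\downarrow I(C))$ and the identification above reads $(D,h)\cong H^C(Y,\varphi)$.

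Admissibility now finishes the argument: since $H^C$ is fully faithful, the counit $I^CH^C(Y,\varphi)\to (Y,\varphi)$ is an isomorphism, hence
\[
I(D)=I^C(D,h)\cong I^CH^C(Y,\varphi)\cong Y= I(C)\times_{I(B)}I(A).
\]
To conclude, one has to verify that this isomorphism coincides with the canonical comparison morphism from $I(D)$ to the pullback $I(C)\times_{I(B)}I(A)$, which is a routine check using naturality of $\eta$ applied to $f$, $g$, $h$ and the projection $D\to A$.

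The only real subtlety is making sure each intermediate object actually lies in the appropriate slice category $(\Xc\downarrow I(C))$, i.e., that the relevant morphisms are in $\E$; this is precisely where axioms (G) and (E2) are used. Once the identification $(D,h)\cong H^C(Y,\varphi)$ is in place, the conclusion is immediate from admissibility, so I do not expect any genuine obstacle beyond careful bookkeeping of the pullbacks.
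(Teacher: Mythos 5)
Your argument is correct. The paper itself offers no proof of this lemma---it simply cites Proposition~2.4 of \cite{Janelidze-Kelly:reflectiveness}---and what you have written is essentially the standard argument for that cited result: a trivial covering is, up to isomorphism, an object $H^B(X,\varphi)$ in the image of $H^B$; its pullback along $g\colon C\to B$ is again, by pasting of pullback squares and naturality of $\eta$, an object $H^C(Y,\varphi')$ in the image of $H^C$; and admissibility gives $I^CH^C\cong 1$, so applying $I$ recovers the pullback in $\Xc$. The only point worth making fully explicit is the existence of the pullback $I(C)\times_{I(B)}I(A)$ in $\Xc$: by (G) one has $HI(f)\in\E$, so the pullback $HI(C)\times_{HI(B)}HI(A)$ exists in $\Cc$ by (E2), and it lies in $\Xc$ because a full replete reflective subcategory is closed under limits computed in the ambient category; with that in place, your appeal to $H$ preserving pullbacks and the final comparison-morphism check are exactly the routine bookkeeping you describe.
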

Consequently, if $f$ in the pullback diagram (\ref{pullback}) is a trivial covering, then so is $h$. And, if we assume that every pullback of a monadic extension is a monadic extension (which is certainly the case if every morphism in the pullback-stable class $\Ec$ is a monadic extension, as we shall assume later on) then we moreover have the following: if, once again in the pullback diagram (\ref{pullback}), $f$ is split by a monadic extension $p\colon E\to B$, then  $h$ is split by $g^*(E,p)$.  Hence: 
\begin{lemma}\label{stablecoverings}
If $\Gamma$ is admissible and monadic extensions are pullback-stable, then coverings are pullback-stable as well. 
\end{lemma}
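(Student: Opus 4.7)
The plan is to reduce the pullback-stability of coverings to the pullback-stability of \emph{trivial} coverings, which has already been recorded (in the discussion immediately preceding the lemma) as a consequence of admissibility via Lemma~\ref{trivialstable}. The only other input is the standing hypothesis that monadic extensions are pullback-stable, which, together with the pullback-stability of $\E$ (axiom (E2)), will provide the monadic extension needed to split the pulled-back covering.

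Concretely, suppose $(A,f)\in\Cov_{\Gamma}(B)$, so there exists a monadic extension $(E,p)\in(\Cc\downarrow B)$ such that $p^{*}(A,f)$ is a trivial covering of $E$. Fix an arbitrary morphism $g\colon C\to B$ in $\Cc$; by (E2) the pullback $g^{*}(A,f)=(D,h)$ exists in $(\Cc\downarrow C)$. I would then form the pullback $g^{*}(E,p)=(E\times_{B}C,p')$, which by hypothesis is again a monadic extension, and show that $(D,h)$ is split by $(E\times_{B}C,p')$, thereby exhibiting it as a covering of $C$.

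For that last step, consider the diagram obtained by pulling $(A,f)$ back first along $p$ and then along the projection $E\times_{B}C\to E$, versus pulling it back first along $g$ and then along $p'$. By the pasting lemma for pullbacks, these agree, so $(p')^{*}(D,h)$ is isomorphic to the pullback of the trivial covering $p^{*}(A,f)$ along $E\times_{B}C\to E$. The remark following Lemma~\ref{trivialstable}, which relies precisely on admissibility, tells us that pullbacks of trivial coverings are trivial coverings; hence $(p')^{*}(D,h)$ is a trivial covering, as required.

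There is no real obstacle: the argument is essentially a two-line pullback-pasting calculation, once one observes that the two hypotheses of the lemma exactly match the two things needed—admissibility to transport triviality across pullbacks, and the pullback-stability of monadic extensions to produce the splitting extension of $C$. The only point calling for mild care is verifying that all pullbacks invoked live in the appropriate slice categories, which is immediate from (E2) and the composition closure (E3).
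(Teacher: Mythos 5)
Your proof is correct and follows essentially the same route as the paper, which establishes this lemma in the discussion immediately preceding its statement: pull back the splitting monadic extension along $g$, then use pullback-pasting together with the pullback-stability of trivial coverings (the consequence of Lemma~\ref{trivialstable}) to see that the pulled-back morphism is split by $g^{*}(E,p)$. Nothing further is needed.
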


We return to our problem of finding a left adjoint for the inclusion functor $H_1\colon \Cov_{\Gamma}(\Cc)\to \Ext(\Cc)$. If it exists, then it will necessarily restrict, for every $B$, to a left adjoint for the inclusion functor $\Cov_{\Gamma}(B)\to (\Cc\downarrow B)$. This is a consequence of the fact that every identity morphism is a covering and $\Cov_{\Gamma}(\Cc)$ is a replete subcategory of $\Ext(\Cc)$ (see Corollary 5.2 in \cite{Im-Kelly}). Moreover, if we assume that $\Gamma$ is admissible and that monadic extensions are pullback-stable, so that coverings are pullback-stable, then the existence of the latter left adjoints is also sufficient for $H_1$ to admit a left adjoint (by Proposition 5.8 in \cite{Im-Kelly}). Thus, we shall be looking for conditions on $\Gamma$ under which, for every $B\in\Cc$, the inclusion functor $\Cov_{\Gamma}(B)\to (\Cc\downarrow B)$ admits a left adjoint.

As remarked in \cite{Janelidze-Kelly:reflectiveness}, it often happens that there exists, for an object $B$, a single monadic extension $p\colon E\to B$ which splits every covering of $B$, i.e.~such that $\Cov(B)=\Spl(E,p)$. For instance,  $p\colon E\to B$  may be such that $E$ is projective with respect to all monadic extensions. Clearly, in this case, it will suffice for us to find a left adjoint for the inclusion functor  $\Spl(E,p)\to (\Cc\downarrow B)$, for every monadic extension $p\colon E\to B$.
 
However, here it will be unnecessary to assume the existence of such monadic extensions $p\colon E\to B$ for which $\Cov(B)=\Spl(E,p)$, since the existence of left adjoints for the inclusion functors $\Spl(E,p)\to (\Cc\downarrow B)$ automatically implies that of left adjoints for the inclusion functors  $\Cov_{\Gamma}(B)\to (\Cc\downarrow B)$ in the particular situation we are interested in. Indeed, as already mentioned, we will be interested in cases where every $p\in\Ec$ is a monadic extension, and every covering is a normal extension. Under these assumptions, and if moreover $\Gamma$ is admissible, any reflection $(\bar{E},\bar{p})$ in  $\Spl(E,p)$ of an object $(E,p)\in (\Cc\downarrow B)$ is necessarily also the reflection in $\Cov(B)$ of $(E,p)$.  Indeed,  for every morphism $h\colon (E,p)\to (A,f)$ to a covering $f\colon A\to B$ of $B$ there is a unique morphism $(\bar{E},\bar{p})\to (A,f)$ rendering the diagram
\[
\xymatrix@=17pt{
(E,p) \ar[rr] \ar[dr]_h && (\bar{E}, \bar{p}) \ar@{.>}[ld]\\
& (A,f)&}
\]
commutative because the existence of the morphism $h\colon (E,p)\to (A,f)$, i.e.~of the commutative triangle
\[
\xymatrix{
E \ar[rr]^h \ar[rd]_p && A \ar[ld]^f\\
&B&}
\]  
forces $(A,f)$ to be split by $(E,p)$: since $f\colon A\to B$ is a covering, hence a normal extension, by assumption, we have that $f^*(A,f)$ is a trivial covering. Hence, the pullback-stability of trivial coverings (by Lemma \ref{trivialstable}) tells us that also $h^*f^*(A,f)\cong(fh)^*(A,f)=p^*(A,f)$ is a trivial covering, i.e.~$(A,f)\in \Spl(E,p)$.     

We have just proved the following lemma:
\begin{lemma}\label{problemreduced}
Assume that $\Gamma$ is admissible, that every morphism in $\Ec$ is a monadic extension and that every covering is a normal extension. Then $H_1\colon \Cov_{\Gamma}(\Cc)\to \Ext(\Cc)$ admits a left adjoint as soon as for every $p\colon E\to B\in\E$ the inclusion functor $\Spl(E,p)\to (\Cc\downarrow B)$ admits a left adjoint.
\end{lemma}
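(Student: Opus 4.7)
The plan is to split the argument into two reductions: first from the global problem of adjoining $H_1\colon \Cov_{\Gamma}(\Cc)\to \Ext(\Cc)$ to the family of local problems of reflecting $(\Cc\downarrow B)$ onto $\Cov_{\Gamma}(B)$ for each $B\in\Cc$, and then from each local problem to the hypothesised reflection into a single $\Spl(E,p)$.

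For the first reduction I would invoke Proposition 5.8 of Im--Kelly, which the paper has just cited in the surrounding discussion: since $\Cov_{\Gamma}(\Cc)$ is a replete subcategory of $\Ext(\Cc)$ containing every identity, a left adjoint to $H_1$ exists as soon as each inclusion $\Cov_{\Gamma}(B)\hookrightarrow (\Cc\downarrow B)$ admits one and coverings are pullback-stable. Pullback-stability of coverings is provided by Lemma~\ref{stablecoverings}: admissibility is part of the hypotheses, and since every $p\in\E$ is a monadic extension and $\E$ is pullback-stable by (E2), monadic extensions are themselves pullback-stable.

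For the second reduction, I would pick $(A,f)\in (\Cc\downarrow B)$ and set $E=A$, $p=f$, which is a monadic extension by assumption. By hypothesis there is a reflection $\eta\colon (A,f)\to (\bar A,\bar f)$ of $(A,f)$ into $\Spl(A,f)$; my claim is that this same $\eta$ serves as a reflection into $\Cov_{\Gamma}(B)$. Since $\Spl(A,f)\subseteq \Cov_{\Gamma}(B)$, only the universal property needs checking: given any covering $f'\colon A'\to B$ and any $h\colon (A,f)\to (A',f')$ in $(\Cc\downarrow B)$, it suffices to show $(A',f')\in\Spl(A,f)$, for then the universal property of $\eta$ in $\Spl(A,f)$ yields the unique factorisation through $(\bar A,\bar f)$.

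The crux, where all three hypotheses must interact, is this last implication. Since $f'$ is a covering and every covering is a normal extension by hypothesis, ${f'}^{*}(A',f')$ is a trivial covering. The equality $f'\comp h=f$ gives an isomorphism $f^{*}(A',f')\cong h^{*}{f'}^{*}(A',f')$, exhibiting $f^{*}(A',f')$ as a pullback of a trivial covering along $h$; admissibility together with Lemma~\ref{trivialstable} then forces $f^{*}(A',f')$ to be a trivial covering, i.e.\ $(A',f')\in \Spl(A,f)$ as required. I expect this interlocking step to be the main subtlety, since it is the single place where all three standing assumptions (admissibility, every extension monadic, every covering normal) combine; once it is in place, the two reductions assemble into the claim.
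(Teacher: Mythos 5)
Your proposal is correct and follows essentially the same route as the paper: the global-to-local reduction via Im--Kelly's Proposition 5.8 together with Lemma~\ref{stablecoverings}, and then the observation that the reflection of $(A,f)$ into $\Spl(A,f)$ already has the universal property with respect to all coverings of $B$, because any morphism $h\colon (A,f)\to (A',f')$ to a covering forces $f^{*}(A',f')\cong h^{*}{f'}^{*}(A',f')$ to be a trivial covering by normality of $f'$ and Lemma~\ref{trivialstable}. This is exactly the paper's argument, up to relabelling of the objects.
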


In \cite{Janelidze-Kelly:reflectiveness} various sufficient conditions were given for left adjoints for the inclusion functors $\Spl(E,p)\to (\Cc\downarrow B)$ to exist, covering many important examples. Here we shall be interested only in the following one: the preservation by the functor $I\colon \Cc\to \Xc$ of those pullbacks (\ref{pullback}) for which $f,g\in \Ec$ and $g$ is a split epimorphism. Note that this condition is slightly stronger than what was required in Proposition 5.2(b) in \cite{Janelidze-Kelly:reflectiveness}. In the ``absolute'' case, where $\Ec$ consists of all regular epimorphisms in $\Cc$, this stronger property was first considered 
 in relationship with central extensions in \cite{Gran:applications}.  In \cite{DD} the authors say in this case that the reflector $I\colon \Cc\to \Xc$ is \emph{regular} if moreover every reflection unit $\eta_A\colon A\to HI(A)$ is a regular epimorphism.

\begin{lemma}\label{lemmaJK}\cite[Proposition 5.2(b)]{Janelidze-Kelly:reflectiveness}\
If $\Gamma$ is admissible, and $I\colon \Cc\to \Xc$ preserves those pullbacks (\ref{pullback}) for which $f,g\in \Ec$ and $g$ is a split epimorphism, then the inclusion functor $\Spl(E,p)\to (\Cc\downarrow B)$ admits a left adjoint, for every monadic extension $p\colon E\to B$.
\end{lemma}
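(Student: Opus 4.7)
The plan is to construct, for each $(A,f)\in(\Cc\downarrow B)$, a reflection into $\Spl(E,p)$ by transporting along the descent theory associated with the monadic extension $p$. Since $p^*\colon(\Cc\downarrow B)\to(\Cc\downarrow E)$ is monadic, an object of $(\Cc\downarrow B)$ amounts to an object of $(\Cc\downarrow E)$ equipped with descent data, and $(A,f)\in\Spl(E,p)$ iff $p^*(A,f)$ is a trivial covering of $E$. By admissibility, the trivial coverings of $E$ form a full reflective subcategory of $(\Cc\downarrow E)$ with reflector $T := H^E\comp I^E$ and unit $\tau$. The idea is to produce the reflection of $(A,f)$ by applying $T$ to $p^*(A,f)$ and descending the result back to $(\Cc\downarrow B)$.

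The key step is to show that $T(p^*(A,f))$ inherits descent data. Write $\pi_1,\pi_2\colon E\times_B E\to E$ for the kernel pair of $p$; these lie in $\Ec$ (being pullbacks of $p$) and are split by the diagonal. The object $p^*(A,f)$ carries a canonical descent datum $\xi\colon\pi_1^*p^*(A,f)\cong\pi_2^*p^*(A,f)$, simply because it comes from $(\Cc\downarrow B)$. To transport $\xi$ it suffices to establish a natural isomorphism $T\comp\pi_i^*\cong\pi_i^*\comp T$ on $(\Cc\downarrow E)$; then $T(\xi)$, pre- and postcomposed with these isomorphisms, yields a descent isomorphism $\bar\xi\colon\pi_1^* T(p^*(A,f))\cong\pi_2^* T(p^*(A,f))$, and the cocycle condition follows by functoriality. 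Via the explicit pullback definitions of $H^E$ and $H^{E\times_B E}$ combined with the naturality of $\eta$, the required commutation reduces to preservation by $I$ of the pullback
\[
\xymatrix{
\pi_i^*(A\times_B E) \ar[r] \ar[d] & A\times_B E \ar[d]^{\pi_E}\\
E\times_B E \ar[r]_-{\pi_i} & E,}
\]
which has the form (\ref{pullback}) with $\pi_E=p^*(f)\in\Ec$ and $\pi_i$ a split epimorphism in $\Ec$; hence the hypothesis applies.

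Once the descent datum on $T(p^*(A,f))$ is in place, monadicity of $p^*$ yields a unique $(\bar A,\bar f)\in(\Cc\downarrow B)$ with $p^*(\bar A,\bar f)\cong T(p^*(A,f))$, and this $(\bar A,\bar f)$ lies in $\Spl(E,p)$ because $T(p^*(A,f))$ is a trivial covering of $E$. The unit $\tau$ is a morphism of descent data — by naturality of $\tau$ applied to $\xi$ — and hence descends to a morphism $\bar\tau\colon(A,f)\to(\bar A,\bar f)$. For the universal property, any $h\colon(A,f)\to(C,g)$ with $(C,g)\in\Spl(E,p)$ becomes, after applying $p^*$, a morphism from $p^*(A,f)$ into a trivial covering of $E$; the universal property of $\tau$ gives a unique factorization, which is automatically a morphism of descent data and so descends to the required unique map $(\bar A,\bar f)\to(C,g)$ over $\bar\tau$.

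The main obstacle is establishing the commutation $T\comp\pi_i^*\cong\pi_i^*\comp T$; everything else (descent of $\bar\xi$, $\bar\tau$, and of the factorization, and verification of the cocycle/universal property) is a formal consequence of admissibility and monadicity of $p^*$. This commutation is precisely what the hypothesis on $I$ preserving pullbacks of morphisms in $\Ec$ along split epimorphisms in $\Ec$ is tailored to provide.
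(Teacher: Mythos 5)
Your proof is correct and is essentially the paper's own argument: the paper also reflects levelwise into trivial coverings via $H^EI^E$ and $H^{E\times_BE}I^{E\times_BE}$ and uses the pullback-preservation hypothesis (applied to the kernel-pair projections, which are split epimorphisms in $\Ec$) to show the result still descends along $p$. The only difference is presentational: the paper phrases descent data as discrete fibrations over $Eq(p)$ and checks that the reflected diagram is again a discrete fibration of equivalence relations (transitivity being the counterpart of your cocycle condition), whereas you work directly with the Eilenberg--Moore/descent-datum description --- an equivalence the paper itself invokes via Lemma~\ref{monadicdiscreet}.
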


Hence, under the assumptions of Lemma's \ref{problemreduced} and \ref{lemmaJK}, the inclusion functor $H_1\colon \Cov_{\Gamma}(\Cc)\to \Ext(\Cc)$ has a left adjoint. Moreover, the condition that every covering is a normal extension follows from the other assumptions. Indeed, first of all we have:

\begin{lemma}
If  $I\colon \Cc\to \Xc$ preserves those pullbacks (\ref{pullback}) for which $f,g\in \Ec$ and $g$ is a split epimorphism, then every covering which is a split epimorphism is a trivial covering.
\end{lemma}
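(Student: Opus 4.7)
The plan is to combine the preservation hypothesis with a monadic descent argument. Let $f\colon A\to B$ be a covering which is a split epimorphism, and choose a monadic extension $p\colon E\to B$ splitting $(A,f)$, so that $\pi_E\colon A\times_B E\to E$ is a trivial covering. First, I form the pullback
\[
\xymatrix{
A\times_B E \ar[r]^-{\pi_E} \ar[d]_{\pi_A} & E \ar[d]^p\\
A \ar[r]_f & B.}
\]
Since the bottom map $f$ lies in $\Ec$ and is a split epimorphism, and the right-hand map $p$ also lies in $\Ec$, the hypothesis ensures $I$ preserves this pullback; as $H$ is a right adjoint, $HI$ preserves it as well. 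Now I consider the cube whose back face is this square, whose front face is its image under $HI$, and whose ``depth'' edges are the corresponding components of the unit $\eta$. The top face of the cube is the naturality square of $\eta$ at $\pi_E$, which is a pullback since $\pi_E$ is trivial. Pasting the top face onto the front face via the pullback lemma gives that the outer rectangle
\[
\xymatrix{
A\times_B E \ar[r]^-{\pi_E} \ar[d]_{\eta_A\comp\pi_A} & E \ar[d]^{\eta_B\comp p}\\
HI(A) \ar[r]_-{HI(f)} & HI(B)}
\]
is a pullback; equivalently, $A\times_B E$ is canonically isomorphic to $E\times_{HI(B)} HI(A)$ as an object over $E$ and over $HI(A)$.

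To finish, let $P$ denote the pullback of $HI(f)$ along $\eta_B$, with canonical comparison morphism $\alpha\colon A\to P$. It suffices to show $\alpha$ is an isomorphism, since that is precisely the statement that the naturality square of $\eta$ at $f$ is a pullback, i.e., that $f$ is a trivial covering. Applying $p^*$ to $\alpha$ produces $p^*\alpha\colon A\times_B E\to E\times_B P$ over $E$, and pasting the defining square of $P$ onto that of $E\times_B P$ identifies $E\times_B P$ with $E\times_{HI(B)} HI(A)$. Under this identification, $p^*\alpha$ corresponds to the canonical isomorphism $A\times_B E\cong E\times_{HI(B)} HI(A)$ produced in the previous paragraph, so $p^*\alpha$ is an isomorphism. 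Since $(E,p)$ is a monadic extension, $p^*$ is monadic and in particular reflects isomorphisms; hence $\alpha$ is an isomorphism, as required.

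The main subtlety will be the diagram chase in the last step: verifying that the canonical iso $A\times_B E\cong E\times_{HI(B)} HI(A)$ coming from the cube really does agree with $p^*\alpha$ under the pasting identification $E\times_B P\cong E\times_{HI(B)} HI(A)$. Once this bookkeeping is done, the reflection-of-isomorphisms property of the monadic functor $p^*$ closes the argument immediately.
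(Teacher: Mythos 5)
Your proof is correct and follows essentially the same route as the paper's: both establish that the composite rectangle over $\eta_B\comp p$ and $HI(f)$ is a pullback by rewriting it (via naturality of $\eta$) as the trivial-covering square for $\pi_E$ pasted with the $HI$-image of the preserved pullback, and then conclude by noting that the monadic functor $p^*$ reflects the isomorphism $p^*\alpha$. The final ``bookkeeping'' identification you flag is treated just as routinely in the paper, so nothing further is needed.
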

\begin{proof}
The proof of Proposition 4.5(2) in \cite{EGV} remains valid. However, one should replace $\Gamma_n$ by the Galois structure $\Gamma$ considered here and, in particular,  ``higher order extensions'' by morphism in $\E$. To avoid confusion, we recall the argument.

Let $f\colon A\to B$ be a covering which is also a split epimorphism. Then there exists a diagram
\[
\xymatrix{
P \ar[r] \ar[d]_g & A \ar[d]^f \ar[r]^-{\eta_A} & HI(A) \ar[d]^-{HI(f)} \\
E \ar[r]_p & B \ar[r]_-{\eta_B}& HI(B)}
\] 
in which the left hand square is a pullback, $p$ is a monadic extension and $g$ a trivial covering. We must prove that the right hand square too is a pullback. Let us write $Q=B\times_{HI(B)}HI(A)$ for the pullback of $\eta_B$ and $HI(f)$. To see that the canonical morphism $A\to Q$ is an isomorphism, we consider it as a morphism $(A,f)\to (Q,\eta_{B}^*(HI(f)))$ in $(\Cc\downarrow B)$ and note that its image by the functor $p^*\colon (\Cc\downarrow B)\to (\Cc\downarrow E)$ is an isomorphism. The latter is the case since in the diagram above the exterior rectangle is a pullback. Indeed, it coincides with the exterior rectangle in the diagram
\[
\xymatrix{
P \ar[d]_g \ar[r]^-{\eta_P} & HI(P) \ar[d]^{HI(g)} \ar[r] & HI(A) \ar[d]^{HI(f)}\\
E \ar[r]_-{\eta_E} & HI(E) \ar[r]_{HI(p)} & HI(B)}
\]
which is a pullback since both of its squares are: the left hand one because $g$ is a trivial covering, and the right hand square since it is the image by $HI$ of the left hand square in the previous diagram, which is a pullback preserved by $I$ (hence also by $HI$) by assumption. 

It suffices now to observe that $p^*\colon (\Cc\downarrow B)\to (\Cc\downarrow E)$ reflects isomorphisms since it is monadic.  
\end{proof}

If we moreover have that coverings are pullback-stable, as is certainly the case when $\Gamma$ is admissible and every morphism in $\E$ is a monadic extension (by Lemma \ref{stablecoverings}) then the kernel pair projections $A\times_BA\to A$ of any covering $A\to B$ are coverings as well and therefore trivial coverings by the previous lemma. Whence:
\begin{lemma}\label{centralisnormal}
Assume that $\Gamma=(\Cc,\Xc,I,H,\eta,\E)$ is admissible, that every morphism in the class $\E$ is a monadic extension, and that $I\colon \Cc\to \Xc$ preserves those pullbacks (\ref{pullback}) for which $f,g\in \Ec$ and $g$ is a split epimorphism. Then every covering is a normal extension.
\end{lemma}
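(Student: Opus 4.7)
The plan is to peel back the definition of normal extension and observe that essentially all the needed content has been assembled in the paragraphs immediately preceding the statement. A normal extension is a monadic extension $p$ such that $(A,p) \in \Spl_{\Gamma}(A,p)$, so given an arbitrary covering $f\colon A\to B$, I would first note that $f\in\E$ is a monadic extension by hypothesis, and then reduce the remaining task to showing that $f^*(A,f)$ is a trivial covering of $A$. Concretely, $f^*(A,f)$ is represented by the first kernel pair projection $\pi_1\colon A\times_B A\to A$, so the whole lemma comes down to proving that $\pi_1$ is a trivial covering.

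Two ingredients combine to give this. First, since $\Gamma$ is admissible and every morphism in the pullback-stable class $\E$ is a monadic extension, monadic extensions are pullback-stable, so Lemma \ref{stablecoverings} applies and coverings are pullback-stable. Hence $\pi_1$, being the pullback of the covering $f$ along itself, is again a covering. Second, $\pi_1$ is a split epimorphism, with section the diagonal $\Delta_A\colon A\to A\times_B A$. The preceding lemma --- that any covering which is also a split epimorphism is a trivial covering --- then applies verbatim to $\pi_1$, yielding that $\pi_1$ is trivial. This shows $(A,f)\in\Spl_{\Gamma}(A,f)$ and therefore that $f$ is a normal extension.

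I do not expect any substantive obstacle: all the real work has been packed into the two lemmas just invoked. The genuinely nontrivial step is the split-epi-covering-is-trivial lemma, whose proof rested on admissibility and on the preservation property of $I$ via a diagram chase; once that lemma and the pullback-stability of coverings (Lemma \ref{stablecoverings}) are in hand, the present result is a purely formal consequence of passing to the kernel pair.
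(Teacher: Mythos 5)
Your argument is exactly the paper's: pullback-stability of coverings via Lemma \ref{stablecoverings}, then the kernel pair projection is a split-epic covering, hence a trivial covering by the preceding lemma, so the covering is a normal extension. Correct and identical in approach.
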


Finally, Lemma's \ref{problemreduced}, \ref{lemmaJK} and \ref{centralisnormal} yield
\begin{theorem}\label{propositionreflectiveness}
Assume that $\Gamma=(\Cc,\Xc,I,H,\eta,\E)$ is admissible, that every morphism in the class $\E$ is a monadic extension, and that $I\colon \Cc\to \Xc$ preserves those pullbacks (\ref{pullback}) for which $f,g\in \Ec$ and $g$ is a split epimorphism. Then the inclusion functor $\Cov_{\Gamma}(\Cc)\to\Ext(\Cc)$ admits a left adjoint and, moreover, every covering is a normal extension.
\end{theorem}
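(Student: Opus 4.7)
The plan is to deduce the theorem directly by chaining the three preceding lemmas; no new argument is needed, so the task is purely to verify that the hypotheses of each lemma are satisfied in turn.

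First I would apply Lemma \ref{centralisnormal}, which takes as input exactly the three assumptions of the theorem: admissibility of $\Gamma$, that every morphism in $\E$ is a monadic extension, and that $I\colon \Cc\to\Xc$ preserves those pullbacks (\ref{pullback}) in which $f,g\in\E$ and $g$ is a split epimorphism. Its conclusion is that every covering is a normal extension, which is the ``moreover'' half of the theorem, and at the same time supplies the remaining hypothesis needed to invoke Lemma \ref{problemreduced}.

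Next I would observe that the hypotheses of Lemma \ref{problemreduced} are now all in place: $\Gamma$ is admissible by assumption, every morphism in $\E$ is a monadic extension by assumption, and every covering is a normal extension by the previous step. Hence, to obtain a left adjoint to $H_1\colon \Cov_{\Gamma}(\Cc)\to \Ext(\Cc)$, it suffices to produce, for each $p\colon E\to B$ in $\E$, a left adjoint to the inclusion $\Spl(E,p)\to (\Cc\downarrow B)$.

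Finally, I would apply Lemma \ref{lemmaJK}, whose hypotheses (admissibility of $\Gamma$ together with preservation by $I$ of the relevant pullbacks) are among the assumptions of the theorem. Since every $p\in\E$ is in particular a monadic extension, Lemma \ref{lemmaJK} supplies the required left adjoint to $\Spl(E,p)\to (\Cc\downarrow B)$ for every $p\colon E\to B$ in $\E$. Combining this with the reduction in the previous paragraph yields the desired left adjoint to $H_1$, completing the proof. There is no genuine obstacle here; the only thing to be careful about is the order in which the lemmas are invoked, since Lemma \ref{problemreduced} requires the normality of coverings as a standing hypothesis, which must therefore be obtained first via Lemma \ref{centralisnormal}.
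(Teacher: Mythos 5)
Your proposal is correct and matches the paper's own argument exactly: the theorem is obtained by combining Lemmas \ref{problemreduced}, \ref{lemmaJK} and \ref{centralisnormal}, with Lemma \ref{centralisnormal} first supplying the normality of coverings needed as a hypothesis in Lemma \ref{problemreduced}. Your ordering of the lemmas and verification of their hypotheses is precisely what the paper intends.
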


\subsection{Proof of Lemma \ref{lemmaJK}}
%\emph{Proof of Lemma \ref{lemmaJK}}.
%\\

Before going to the next section, it is useful to give a proof for Lemma \ref{lemmaJK}, as we shall not only be needing the existence, but also the construction of the left adjoints of the inclusion functors $\Cov_{\Gamma}(\Cc)\to \Ext(\Cc)$, later on. 
 
For this, note first of all that when $\Gamma$ is admissible, i.e. if for every $B$, the right adjoint $H^B$ in the adjunction
 \[
\xymatrix@=30pt{
{(\Cc\downarrow B) \, } \ar@<1ex>[r]_-{^{\perp}}^{I^B} & {\, (\Xc\downarrow I(B)) \, }
\ar@<1ex>[l]^{H^B} }
\]
is fully faithful,  then the (essential) image of any functor $H^B$ consists exactly of the trivial coverings of $B$, and the composite $H^BI^B$ provides a left adjoint for the inclusion functor $\Triv_{\Gamma}(B)\to (\Cc\downarrow B)$. (Here we have written $\Triv_{\Gamma}(B)$ for the full subcategory $(\Cc\downarrow B)$ consisting of all trivial coverings.) Hence we have that ``trivial coverings are reflective''. In order to deduce from this the reflectiveness of coverings or, more specifically, the existence of left adjoints for the inclusion functors  $\Spl(E,p)\to (\Cc\downarrow B)$, we need to say a few words about what it means for a morphism to be a monadic extension.

First of all we note that a left adjoint for the pullback-functor $p^*\colon (\Cc\downarrow B)\to (\Cc\downarrow E)$ exists for \emph{any} morphism $p\colon E\to B$ in $\E$, and is given by composition with $p$. We denote it $\Sigma_p$ and write $T^p=p^*\Sigma_p$  for the corresponding monad on $(\Cc\downarrow E)$. Next, we recall that a (downward) morphism 
\[
\xymatrix{
S \ar@{}[rd] \ar@<0.5 ex>[r] \ar@<-0.5 ex>[r] \ar[d]_h & C \ar[d]^g\\
R \ar@<0.5 ex>[r] \ar@<-0.5 ex>[r] & E }
\]
of internal equivalence relations is a \emph{discrete fibration} if  the commutative square involving the second projections (hence, also the square involving the first projections) is a pullback. For a fixed equivalence relation $R$, we write  $\Cc^{R}$ for the category of discrete fibrations $(g,h)\colon S\to R$, with the obvious morphisms, and $\Cc^{\downarrow R}$ (respectively, $\Cc^{\downarrow_t R}$) for the full subcategory consisting of  those $(g,h)\colon S\to R$ for which $g$ and $h$ lie in $\E$ (respectively, are trivial coverings).

Now, let us fix a morphism $p\colon E\to B$ in $\Ec$ and write $Eq(p)$ for the equivalence relation $\xymatrix{E\times_B E\ar@<0.5 ex>[r] \ar@<-0.5 ex>[r] & E}$ (i.e. the kernel pair of $p$).  By sending any morphism $f\colon A\to B$ in $\Ec$ to the discrete fibration displayed in the left hand side of the diagram
\begin{equation}\label{comparisondiagram}\vcenter{
\xymatrix{
E\times_B E\times_B A  \ar[d] \ar@<0.5 ex>[r] \ar@<-0.5 ex>[r] & E\times_BA  \ar[r] \ar[d] & A \ar[d]^f\\
E\times_B E \ar@<0.5 ex>[r] \ar@<-0.5 ex>[r] & E \ar[r]_p & B}}
\end{equation}
obtained by first pulling back $f$ along $p$, and next taking kernel pairs, we obtain a functor $K^p\colon (\Cc\downarrow B)\to \Cc^{\downarrow Eq(p)}$. It turns out  (see, for instance, \cite{Janelidze-Tholen:facets1,Janelidze-Tholen2}) that $\Cc^{\downarrow Eq(p)}$ is equivalent to the category $(\Cc\downarrow E)^{T^p}$ of (Eilenberg-Moore) algebras for the monad $T^p$, and that $K^p\colon (\Cc\downarrow B)\to \Cc^{\downarrow Eq(p)}$ corresponds, via this equivalence, to the comparison functor $K^{T^p}\colon (\Cc\downarrow B)\to (\Cc\downarrow E)^{T^p}$, whence: 

\begin{lemma}\label{monadicdiscreet}\cite{Janelidze-Tholen:facets1,Janelidze-Tholen2}
A morphism $p\colon E\to B$ in $\E$ is a monadic extension if, and only if, the functor $K^p\colon (\Cc\downarrow B)\to \Cc^{\downarrow Eq(p)}$ is an equivalence of categories. 
\end{lemma}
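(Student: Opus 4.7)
The plan is to reduce the statement to the classical equivalence between internal discrete fibrations and Eilenberg-Moore algebras for the induced monad. By definition, the morphism $p\colon E\to B$ is a monadic extension precisely when $p^{*}\colon (\Cc\downarrow B)\to (\Cc\downarrow E)$ is monadic, i.e.~when the comparison functor $K^{T^{p}}\colon (\Cc\downarrow B)\to (\Cc\downarrow E)^{T^{p}}$ (with $T^{p}=p^{*}\Sigma_{p}$ as in the excerpt) is an equivalence. So it suffices to produce an equivalence of categories
\[
\Phi\colon \Cc^{\downarrow Eq(p)}\xrightarrow{\;\simeq\;} (\Cc\downarrow E)^{T^{p}}
\]
together with a natural isomorphism $\Phi\comp K^{p}\cong K^{T^{p}}$; the lemma will then follow, since equivalences cancel on the right.

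To construct $\Phi$, I would start with a discrete fibration $(g,h)\colon S\to Eq(p)$ with both legs $g,h\in\E$. Because $(g,h)$ is a discrete fibration, the square formed by $g$, $h$ and the first projections is a pullback, which exhibits $S\cong E\times_{B}C$ (where $C$ is the total object of $S\rightrightarrows C$, and the fibre product is taken along $p$ and $pg$). The second projection of the equivalence relation then produces, via this identification, a morphism
\[
\alpha\colon E\times_{B}C\to C
\]
over $E$, and one checks routinely that the equivalence relation axioms on $S$ translate exactly into the unit and associativity laws of a $T^{p}$-algebra structure on $(C,g)$. I would set $\Phi(S,g,h)=((C,g),\alpha)$ and extend $\Phi$ to morphisms in the obvious way.

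In the converse direction, starting from a $T^{p}$-algebra $((C,g),\alpha\colon E\times_{B}C\to C)$, define $S:=E\times_{B}C$ with first leg the canonical projection to $E$ and second leg equal to $\alpha$; pairing these with $g$ on the other side of the reflexive graph gives a discrete fibration $S\rightrightarrows C$ over $Eq(p)\rightrightarrows E$, and the algebra laws guarantee that $S$ is indeed an internal equivalence relation. These two constructions are mutually inverse (up to canonical iso), yielding the equivalence $\Phi$. Applying $\Phi$ to $K^{p}(f)$, displayed in diagram \eqref{comparisondiagram}, returns exactly the pullback $E\times_{B}A\to E$ together with its canonical $T^{p}$-algebra structure, which is $K^{T^{p}}(f)$; this gives the desired natural isomorphism $\Phi\comp K^{p}\cong K^{T^{p}}$ and completes the proof.

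The main obstacle is not conceptual — the identification of discrete fibrations with monad algebras is a well-known general result — but rather bookkeeping: one must verify that under the assumption $g,h\in\E$ the relevant pullbacks exist (which follows from (E2) for $\Ec$), that the correspondence is strictly compatible with the forgetful functors to $(\Cc\downarrow E)$, and that morphisms of discrete fibrations are exactly morphisms of algebras. Since this equivalence is recorded in \cite{Janelidze-Tholen:facets1,Janelidze-Tholen2}, I would simply invoke it after stating the identification, rather than reproduce the full verification.
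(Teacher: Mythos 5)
Your proposal is correct and follows essentially the same route as the paper, which likewise reduces the statement to the equivalence $\Cc^{\downarrow Eq(p)}\simeq(\Cc\downarrow E)^{T^p}$ compatible with $K^p$ and the comparison functor $K^{T^p}$, and simply cites \cite{Janelidze-Tholen:facets1,Janelidze-Tholen2} for that equivalence. The extra detail you give on the construction of $\Phi$ (identifying $S\cong E\times_B C$ via the discrete-fibration pullback and reading off the algebra structure from the other projection) is accurate but not needed beyond what the paper records.
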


Suppose now that the conditions of Lemma \ref{lemmaJK} are satisfied, and let $p\colon E\to B$ be a monadic extension. Then, since trivial coverings are pullback-stable (by Lemma \ref{trivialstable}) and by definition of $\Spl(E,p)$, the equivalence $K^p\colon (\Cc\downarrow B)\to \Cc^{\downarrow Eq(p)}$ restricts to an equivalence $\Spl(E,p)\to \Cc^{\downarrow_t Eq(p)}$. Therefore, in order to find a left adjoint for the inclusion functor $\Spl(E,p)\to (\Cc\downarrow B)$ it suffices to find one for the inclusion functor $\Cc^{\downarrow_t Eq(p)}\to \Cc^{\downarrow Eq(p)}$. But we already know from above that ``trivial coverings are reflective'' so that the inclusion functors $\Triv_{\Gamma}(E)\to (\Cc\downarrow E)$ and $\Triv_{\Gamma}(E\times_BE)\to (\Cc\downarrow E\times_BE)$ have left adjoints $H^EI^E\colon (\Cc\downarrow E)\to \Triv_{\Gamma}(E)$ and $H^{E\times_BE}I^{E\times_BE}\colon (\Cc\downarrow E\times_BE)\to \Triv_{\Gamma}(E\times_BE)$, respectively. And, we have that these extend to a left adjoint for $\Cc^{\downarrow_t Eq(p)}\to \Cc^{\downarrow Eq(p)}$, as a consequence of the assumption on $I\colon \Cc\to \Xc$. To see this, consider a discrete fibration $(g,h)\colon S\to Eq(p)$ as depicted below. Applying to it the functor $HI\colon \Cc\to \Cc$ and then pulling back along the units $\eta_E\colon E\to HI(E)$ and $\eta_{E\times_BE}\colon E\times_BE\to HI(E\times_BE)$ yields the right hand diagram
\[
\xymatrix{
S \ar@{}[rd] \ar@<0.5 ex>[r]^{\pi_1} \ar@<-0.5 ex>[r]_{\pi_2} \ar[d]_h & C \ar[d]^g && \bar{S} \ar@<0.5 ex>[r] \ar@<-0.5 ex>[r] \ar[d]_{\bar{h}} & \bar{C} \ar[d]^{\bar{g}}\\
E\times_BE \ar@<0.5 ex>[r] \ar@<-0.5 ex>[r] & E && E\times_BE \ar@<0.5 ex>[r] \ar@<-0.5 ex>[r] & E}
\]
where $\bar{g}=H^EI^E(g)$ and $\bar{h}=H^{E\times_BE}I^{E\times_BE}(h)$. We claim that it is a discrete fibration of equivalence relations, hence an object of $\Cc^{\downarrow_t Eq(p)}$.
Since, in the left hand diagram, the commutative square involving the second projections is a pullback which is preserved
by $I$ (hence also by $HI$) by assumption, we clearly have that the corresponding commutative square in the right hand diagram is a pullback too. From this it follows easily that $\bar{S}$ is a relation on $\bar{C}$, which is both reflexive and symmetric. It is moreover transitive, since the pullback
\[
\xymatrix{
S\times_CS \ar[r] \ar[d] & S \ar[d]^{\pi_1}\\
S \ar[r]_{\pi_2} & C}
\]
is preserved by $I$ (hence by $HI$), by assumption. Thus we obtain a functor $\Cc^{\downarrow Eq(p)}\to \Cc^{\downarrow_t Eq(p)}$, which is clearly left adjoint to $\Cc^{\downarrow_t Eq(p)}\to \Cc^{\downarrow Eq(p)}$. This concludes the proof of Lemma \ref{lemmaJK}.
%\newline
%\\
%\emph{Construction of the left adjoint for $H_1\colon \CExt_{\Gamma}(\Cc) \to\Ext(\Cc)$.}
%\\

\subsection{Construction of the left adjoint for $H_1\colon \CExt_{\Gamma}(\Cc) \to\Ext(\Cc)$}

We have just described the left adjoints of the inclusion functors $\Spl(E,p)\to (\Cc\downarrow B)$. Tracing back our steps, we thus find a left adjoint for $H_1\colon \Cov_{\Gamma}(\Cc)\to \Ext(\Cc)$, sending a morphism $f\colon A\to B$ in $\E$ to the covering $\bar{f}\colon \bar{A}\to B$ obtained as follows (as depicted in the diagram below): first we pull $f\colon A\to B$ back along itself, and then we take kernel pairs. This yields the discrete fibration $K^f(A,f)$ to which we apply the functor $HI\colon \Cc\to\Cc$. Next we pull back along the units $\eta_A\colon A\to HI(A)$ and $\eta_{A\times_BA}\colon A\times_BA\to HI(A\times_BA)$ and obtain a discrete fibration in $\Cc^{\downarrow_t Eq(f)}$. The covering in $\Spl(A,f)$ which corresponds to this discrete fibration via the equivalence $\Spl(A,f)\cong \Cc^{\downarrow_t Eq(f)}$ is the desired $\bar{f}$.
\begin{equation}\label{bigdiagram}\vcenter{
%\xymatrix@!0@=5em{
\xymatrix@=1em{
%\xymatrix{
A\times_BA\times_BA \ar@<.5 ex>[ddd]  \ar@<-.5 ex>[ddd] \ar[rrr] \ar[rdd] \ar[rrd] &&& HI(A\times_BA\times_BA) \ar[rdd]  \ar@<.5 ex>@{.>}[ddd]  \ar@<-.5 ex>@{.>}[ddd]  & \\
&& P_1  \ar@<.5 ex>@{.>}[ddd]  \ar@<-.5 ex>@{.>}[ddd]  \ar[ru] \ar[ld] &&\\
& A\times_BA \ar[rrr]  \ar@<.5 ex>[ddd]  \ar@<-.5 ex>[ddd]  &&& HI(A\times_BA)  \ar@<.5 ex>[ddd]  \ar@<-.5 ex>[ddd]  \\
A\times_BA \ar[ddd]_{\pi_2} \ar[rdd]_>>>>>>>>>>{\pi_1} \ar@{.>}[rrd]_{q} \ar@{.>}[rrr] &&& HI(A\times_BA) \ar[rdd]_{HI(\pi_1)} &\\
&& P_0 \ar@{.>}[ddd]^{p} \ar[ld]^{\bar{\pi}_1} \ar[ru] &&\\
&A \ar[rrr] \ar[ddd]^<<<<<<<<<<f &&& HI(A) \\
A \ar[rdd]_f \ar@{.>}[rrd] &&&&\\
&&\bar{A} \ar[ld]^{\bar{f}} &&\\
& B &&&
}}
\end{equation}

\section{Higher central extensions}\label{mainsection}
\emph{As before, we fix a Galois structure $\Gamma=(\Cc,\Xc,I,H,\eta,\E)$.}
\newline

We now wish to consider conditions on $\Gamma$, as well as define a class $\E^1$ of morphisms in $\Ext(\Cc)$, such that
\begin{enumerate}
\item
the conditions on $\Gamma$ imply the assumptions of Theorem \ref{propositionreflectiveness}. Hence, in particular, the inclusion functor $H_1\colon \Cov_{\Gamma}(\Cc)\to\Ext(\Cc)$ admits a left adjoint $I_1\colon \Ext(\Cc)\to \Cov_{\Gamma}(\Cc)$ with reflection unit $\eta^1$.
\item
$\Gamma_1=(\Cc_1,\Xc, I_1, H_1, \eta^1,\E^1)$ is a Galois structure satisfying the same conditions as $\Gamma$. 
\end{enumerate}
Hence, when $\Gamma$ satisfies these conditions, we will obtain, inductively, for each $n\geq 1$ a Galois structure $\Gamma_n=(\Cc_n,\Xc_n,H_n,I_n,\eta^n,\E^n)$ such that $\Cc_n=\Ext(\Cc_{n-1})$, $\E^n=(\E^{n-1})^1$ and $\Xc_n$ consists exactly of the coverings with respect to $\Gamma_{n-1}$.

We begin by defining the class $\E^1$ of morphisms in $\Ext(\Cc)$. If $a, b\in\Ec$ then a morphism $a\to b$ in $\Ext(\Cc)$ is a commutative square
\begin{equation}\label{doubleext}\vcenter{
\xymatrix{
A' \ar[d]_a \ar[r]^{f'} & B' \ar[d]^{b}\\
A \ar[r]_{f} & B}}
\end{equation} 
in $\Cc$.  Certainly, we want every morphism in $\Ec^1$ to admit pullbacks along arbitrary morphisms in $\Ext(\Cc)$. Since we want $\E^1$ (respectively,  $\Gamma_1$) to inherit the properties of $\E$ (respectively, $\Gamma$) it is natural to require moreover that such pullbacks be pointwise---the inclusion functor $\Ext(\Cc)\to \Arr(\Cc)$ preserves them---and that every morphism (\ref{doubleext}) in $\Ec^1$ has its ``horizontal'' arrows $f$ and $f'$ too in $\Ec$. We define $\Ec^1$ as the class of morphisms in $\Ext(\Cc)$ satisfying these properties.

\begin{proposition}\label{propositiondext}
The above defined class $\E^1$ consists exactly of those commutative squares
\begin{equation}\label{doubleext}\vcenter{
\xymatrix{
A' \ar[d]_a \ar[r]^{f'} & B' \ar[d]^{b}\\
A \ar[r]_{f} & B}}
\end{equation} 
of morphisms in $\E$ for which also the induced morphism $(a,f')\colon A'\to A\times_{B}B'$ to the pullback of $b$ and $f$ lies in $\Ec$. Hence, the class $\E^1$ is the same $\E^1$ as considered for instance in \cite{EGV} in a semi-abelian context.

\end{proposition}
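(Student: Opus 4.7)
My plan is to prove the two inclusions separately, using only the axioms (E1)--(E3) of the class $\E$.

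For the inclusion $\E^1 \subseteq \{\text{squares with } (a,f') \in \E\}$, suppose that a morphism $(f,f')\colon a \to b$ lies in $\E^1$. Then $f, f' \in \E$ by the definition of $\E^1$, while $a, b$ already lie in $\E$ as objects of $\Ext(\Cc)$. To deduce that the comparison $(a,f')$ lies in $\E$, I would apply the pointwise-pullback requirement to the test morphism $(b, 1_{B'})\colon 1_{B'} \to b$ in $\Ext(\Cc)$; this is a well-defined morphism because $1_{B'}$ lies in $\E$ by (E1). The pointwise pullback of $(f,f')$ along $(b, 1_{B'})$ has top vertex $A' \times_{B'} B' \cong A'$ and bottom vertex $A \times_B B'$, and its induced vertical arrow is precisely the comparison $(a, f')\colon A' \to A \times_B B'$. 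Since this pointwise pullback must itself be a pullback in $\Ext(\Cc)$, the arrow $(a,f')$ must belong to $\E$.

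For the reverse inclusion, assume that $a, b, f, f' \in \E$ and that $(a, f')\colon A' \to A \times_B B'$ lies in $\E$. Given any morphism $(g, g')\colon c \to b$ in $\Ext(\Cc)$, I must verify that the induced vertical arrow $\pi\colon A' \times_{B'} C' \to A \times_B C$ of the pointwise pullback lies in $\E$. My strategy is to factor $\pi$ through the intermediate object $A \times_B C'$, which is well-defined via the common morphism $b g' = g c \colon C' \to B$, as
\[
A' \times_{B'} C' \;\stackrel{\alpha}{\longrightarrow}\; A \times_B C' \;\stackrel{\beta}{\longrightarrow}\; A \times_B C.
\]
A routine inspection of universal properties shows that $\alpha$ arises as the pullback of $(a, f')$ along the map $A \times_B C' \to A \times_B B'$ sending $(x, y)$ to $(x, g'(y))$, while $\beta$ arises as the pullback of $c$ along the projection $A \times_B C \to C$. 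Both factors therefore lie in $\E$ by (E2), whence $\pi \in \E$ by (E3), as required.

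I do not anticipate any serious obstacle: the entire argument is a purely formal consequence of the Galois-structure axioms, and the only insights needed are the choice of the test morphism $(b, 1_{B'})$ in the forward direction and the choice of the intermediate object $A \times_B C'$ in the reverse direction.
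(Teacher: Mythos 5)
Your proof is correct and follows essentially the same route as the paper: the inclusion $\E^1\subseteq\E'$ is obtained there by exactly your device of pulling back along the test morphism $(1_{B'},b)\colon 1_{B'}\to b$, while the converse inclusion, which the paper delegates to a citation of \cite[Proposition~3.5(1)]{EGV}, is precisely the factorisation through $A\times_B C'$ that you spell out. No gaps.
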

\begin{proof}
Let us write $\E'$ for the class of morphisms in $\Ext(\Cc)$ just defined. We must prove that $\E'=\E^1$. 

The validity of the inclusion $\E'\subseteq \E^1$ is straightforward and has been observed many times before with varying assumptions on $\Cc$ and $\E$---see, for instance, Proposition 3.5(1) in \cite{EGV} for a proof. For $\E'\supseteq \E^1$ it suffices to note that for each commutative square (\ref{doubleext}) in $\E^1$, the induced morphism $A'\to P$ to the pullback $P=A\times_{B}B'$ of $b$ and $f$ is the pullback in $\Arr(\Cc)$ of the morphisms $(f',f)\colon a\to b$ and $(1_{B'},b)\colon 1_{B'}\to b$: 
\[
\xymatrix{
& P\ar[rr] \ar@{.>}[dd] && B' \ar[dd]^b\\
A' \ar@{=}[dd] \ar[ru] \ar[rr]^>>>>>>>{f'} && B' \ar@{=}[ru] \ar@{=}[dd] &\\
& A \ar@{.>}[rr]_<<<<<<f && B \\
A' \ar[ru]^a \ar[rr]_{f'} && B' \ar[ru]_b}
\] 
\end{proof}

Let us then consider the following list of conditions on $\Gamma$, which we do not assume to be admissible, a priori.

\begin{itemize}
\item[(E4)] If  $g\comp f\in \E$ then $g\in\E$.
\item[(E5)] Any split epimorphism in $\Ext(\Cc)$, i.e.~any diagram
\[
\xymatrix{A' \ar@<.5ex>[r]^-{f'} \ar[d]_-{a} & B' \ar[d]^-{b} \ar@<.5ex>[l]^{s'}\\
A \ar@<.5ex>[r]^-{f} & B \ar@<.5ex>[l]^s}
\]
in $\Cc$ for which $a,b\in\E$ and $f'\circ s'=1_{B'}$, $f\circ s=1_B$, $b\circ f'=f\circ a$ and $a\circ s'=s\circ b$, lies in $\E^1$.
\item[(M)] Every morphism in $\Ec$ is a monadic extension.
\item[(B)] $\Xc$ is a \emph{strongly $\E$-Birkhoff} subcategory of $\Cc$: for every $f\colon A\to B\in\E$, the induced commutative square 
\begin{equation}\label{birkhofsquare}\vcenter{
\xymatrix{
A \ar[r]^-{\eta_A} \ar[d]_f & HI(A)\ar[d]^{HI(f)}\\
B\ar[r]_-{\eta_B}& HI(B)}}
\end{equation}
is in $\Ec^1$.
\end{itemize}

Let $\Cc$ be a category with finite limits and coequalisers, and let $\Ec$ consist of all regular epimorphisms in $\Cc$. Then conditions (E1)--(E4) are satisfied if, and only if, $\Cc$ is regular. When $\Cc$ is regular, then condition (E5) holds if, and only if, $\Cc$ is Mal'tsev (see \cite{Bourn:Maltsev,Bourn2003}). Condition (M) is certainly satisfied if $\Cc$ is Barr-exact, but there are many examples of regular categories $\Cc$ in which (M) is valid but which fail to be Barr-exact (for instance, the quasi-variety of torsion-free abelian groups, or the category of topological groups, both of which are also Mal'tsev). 

As mentioned in the introduction, a full reflective subcategory $\Xc$ of $\Cc$ is a \emph{Birkhoff} subcategory if it is closed in $\Cc$ under subobjects and regular quotient objects which, according to \cite{Janelidze-Kelly:central-extensions}, is equivalent to the condition that for every regular epimorphism $f\colon A\to B$, the square (\ref{birkhofsquare}) is a pushout square of regular epimorphisms. Notice that any pullback square of regular epimorphisms in a regular category is necessarily a pushout, and that this implies that also every member of the class $\E^1$ must be a pushout square. When $\Cc$ is Barr-exact and Mal'tsev, one has the converse: every pushout square of regular epimorphisms is in $\E^1$ (by Theorem 5.7 of \cite{Carboni-Kelly-Pedicchio}) hence every Birkhoff subcategory is strongly $\E$-Birkhoff. In particular when $\Cc$ is a Mal'tsev variety then condition (B) is satisfied if, and only if, $\Xc$ is a subvariety.

Hence the list of conditions (E3)--(E5), (M) and (B) is satisfied in particular in the situation studied in \cite{EGV}, i.e. for $\Cc$ a semi-abelian (hence exact Mal'tev) category, $\Xc$ a Birkhoff subcategory of $\Cc$ and $\Ec$ the class of all regular epimorphisms in $\Cc$. This includes the prototypical example where  $\Cc$ is the variety of groups and $\Xc$ is the variety of abelian groups. Some other examples are the following:: 

\begin{example}
As just explained, the conditions (E3)--(E5), (M) and (B) are satisfied for $\Cc$ any exact Mal'tsev category, $\Xc$ any Birkhoff subcategory of $\Cc$  and $\Ec$ the class of all regular epimorphisms in $\Cc$. For instance, we may choose $\Xc$ to be the Birkhoff subcategory of \emph{abelian objects}: objects $A\in\Cc$ which admit a (necessarily unique) internal Mal'tsev operation (=a morphism $p\colon A\times A\times A\to A$ satisfying $p(x,x,y)=y$ and $p(x,y,y)=x$).

An example of this situation outside the scope of \cite{EGV} is the following: for a fixed set $B$, let $\Cc$ be the category of (small) groupoids with set of objects $B$, whose morphisms are the functors $F\colon G\to H$ such that $F(b)=b$ for every object $b\in B$. Then $\Cc$ is indeed exact Mal'tsev (it is even \emph{protomodular}---see \cite{Bourn1991}), but not pointed, hence not semi-abelian, unless $B$ is a one-element set, in which case $\Cc$ is isomorphic to the variety of groups. The subcategory $\Xc$ of abelian objects of $\Cc$ consists exactly of those groupoids whose groups of automorphisms are abelian (see Theorem 4 in \cite{Bourn:Aspherical}). 
\end{example}

\begin{example}
Another example of a non-pointed exact Mal'tsev category $\Cc$ with a Birkhoff subcategory $\Xc$ is the variety $\Cc$ of commutative rings (with unit) with its subvariety $\Xc$ of Boolean rings (=rings $A$ such that $a^2=a$ for every $a\in A$). 
\end{example}

%\begin{example}
%Let $\Cc$ be the quasi-variety of torsion-free groups (=groups $A$ such that $a^n=1$ implies $a=1$ for each $a\in A$ and each $n\geq 1$), $\Xc$ be the subquasivariety of abelian torsion-free abelian groups and $\Ec$ be the class of all surjective homomorphisms. It is known that $\Cc$ is not exact but that condition (M) still holds. One easily verifies that also condition (B) is satisfied in this case.
%\end{example}

We want to prove that the above conditions imply the assumptions of Theorem \ref{propositionreflectiveness} and are inherited by the induced Galois structure $\Gamma_1$. We already know the following: 
\begin{lemma}\label{extensionsgoup}\cite[Proposition~3.4]{EJV}\
If the class $\E$ of morphisms in the category $\Cc$ satisfies conditions (E1)--(E5), then the class $\E^1$ of morphisms of $\Ext(\Cc)$ satisfies these same conditions. 
\end{lemma}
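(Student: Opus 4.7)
The plan is to verify each of conditions (E1)--(E5) for $\E^1$ in turn, using the concrete characterization provided by Proposition \ref{propositiondext}: a commutative square belongs to $\E^1$ if and only if its four morphisms all lie in $\E$ and the induced map $A'\to A\times_B B'$ to the pullback of the lower-right cospan lies in $\E$.

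Conditions (E1), (E3) and (E4) are relatively routine. For (E1), the isomorphisms in $\Ext(\Cc)$ are squares with both horizontal arrows invertible, so the comparison map to the pullback is itself an isomorphism and $\E$-membership follows from (E1) for $\E$. For (E3) and (E4), I would paste squares and observe that the comparison map of a composite factors through iterated pullbacks, so $\E$-membership of each factor follows from the corresponding axiom for $\E$, combined with pullback-stability (E2) applied to $\E$.

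The main technical step is (E2). Given $(f',f)\colon a\to b$ in $\E^1$ and any $(g',g)\colon c\to b$ in $\Ext(\Cc)$, I construct the pullback pointwise by setting $P'=A'\times_{B'}C'$ and $P=A\times_B C$, both of which exist by (E2) for $\E$, and let $p\colon P'\to P$ be the induced morphism. The key observation is that $p$ can be identified, via standard pullback manipulation, as a pullback of the $\E$-morphism $(a,f')\colon A'\to A\times_B B'$ along a naturally arising map $A\times_B C\to A\times_B B'$, which places $p$ in $\E$; similarly, the comparison from $P'$ to the pullback of $p$ and the projection $C'\to C$ turns out to be yet another pullback of $(a,f')$, hence also lies in $\E$. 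Together these calculations show that the pointwise pullback lives in $\Ext(\Cc)$ and that the projection to $(g',g)$ belongs to $\E^1$.

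For the analogue of (E5), a split epimorphism in $\Ext(\Ext(\Cc))$ between two objects of $\E^1$ amounts to a cube in $\Cc$ with splittings on a pair of opposite faces. Applying (E5) for $\E$ to each such face places it in $\E^1$, and the remaining requirement from Proposition \ref{propositiondext}, namely that the induced square of comparison maps to the respective pullbacks also lies in $\E^1$, is a further, separate application of (E5) to this square, which is itself a split epimorphism in $\Ext(\Cc)$ between $\E$-morphisms. I expect (E2) to be the main obstacle: the bookkeeping needed to recognise both the structural map $p$ and the secondary comparison map as pullbacks of the single $\E$-morphism $(a,f')$ is the heart of the argument, whereas (E5) at the higher level is in essence a repeated application of (E5) at the base level along each of the relevant faces of the cube.
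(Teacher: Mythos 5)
The paper does not actually prove this lemma: it is imported wholesale from \cite[Proposition~3.4]{EJV}, so there is no internal argument to measure yours against, and your proposal has to stand on its own. For (E1), (E2) and (E3) your plan is essentially sound, with one imprecision in (E2): there is no ``naturally arising map'' $A\times_B C\to A\times_B B'$ (that would require a morphism $C\to B'$), so the structural map $p\colon A'\times_{B'}C'\to A\times_B C$ is not a single pullback of $\langle a,f'\rangle$; rather it is the composite of the pullback of $\langle a,f'\rangle$ along $1_A\times_B g'\colon A\times_B C'\to A\times_B B'$ with the map $A\times_B C'\to A\times_B C$, which is a pullback of $c$, and then (E2) and (E3) for $\E$ finish the job. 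The identification of the secondary comparison map $A'\times_{B'}C'\to (A\times_B C)\times_C C'=A\times_B C'$ as a pullback of $\langle a,f'\rangle$ is correct.

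The genuine gaps are at (E4) and (E5), which are precisely where the content of \cite[Proposition~3.4]{EJV} lies. For (E4), writing $\langle a,g'f'\rangle=(1_A\times_B\langle b,g'\rangle)\comp\langle a,f'\rangle$, condition (E4) for $\E$ only tells you that a \emph{pullback} of the comparison map $\langle b,g'\rangle$ lies in $\E$, and nothing in (E1)--(E4) lets you descend from a pullback of a morphism to the morphism itself; worse, the components $f,f'$ of the first square need not lie in $\E$ at all, so even the pullback-stability step is unavailable. This is not a presentational defect: in $\Set$ with $\E$ the surjections, (E1)--(E4) hold while right cancellation fails for $\E^1$ (take $C=\{\ast\}$, $C'=\{0,1\}$, $B=\{x,y\}$, $B'=\{(x,0),(x,1),(y,0)\}\subseteq B\times C'$ with the two projections, and precompose with $A=\{\ast\}\mapsto x$, $A'=\{0,1\}\to B'$, $i\mapsto (x,i)$: the composite square is in $\E^1$ but $B'\to B\times C'$ is not onto). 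Hence any correct proof of (E4) for $\E^1$ must genuinely use the Mal'tsev axiom (E5), and your ``routine pasting'' does not. For (E5) at level one, the relevant faces of the cube are indeed split epimorphisms in $\Ext(\Cc)$ and so land in $\E^1$, but the induced square of comparison maps is \emph{not} itself a split epimorphism in $\Ext(\Cc)$: the comparison map of a split epic square need not be split even in groups (for the split extensions $F(x,y)\to\langle x\rangle$ and $\langle y\rangle\to 1$ the comparison $F(x,y)\to \Z\times\Z$ is surjective but admits no section, since $\Z\times\Z$ is not a subgroup of $F(x,y)$). So the ``second application of (E5)'' is not available, and the $\E$-membership of the remaining comparison morphism $A'_1\to A_1\times_{A_2\times_{B_2}B_1}(A'_2\times_{B'_2}B'_1)$ still has to be argued.
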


%
%Before continuing, let us also recall here that for any monadic extension $p\colon E\to B$ the corresponding morphism $p\colon (E,p)\to (B,1_B)$ in $(\Cc\downarrow B)$ is a regular epimorphism, stable under pullback along morphisms in $\E$ in the following sense: for any morphism $f\colon A\to B$ in $\E$, the morphism $\pi_2\colon (E\times_BA, p\pi_2)\to (A,f)$ in $(\Cc\downarrow B)$ is a regular epimorphism, where $\pi_1$ and $\pi_2$ are the pullback projections, as in diagram (\ref{comparisondiagram}). This is a simple consequence of the well-known fact that the counit of an adjunction of which the right adjoint is (pre)monadic, is always a regular epimorphism (see, for instance, Proposition 2.3 in \cite{MacDonald-Sobral}). Clearly, 
%$\pi_2\colon (E\times_BA, p\pi_2)\to (A,f)$ is the $(A,f)$-component of the counit of the adjunction $\Sigma_p \dashv   p^*$. 
%
%Note that we may considerer any discrete fibration $S\to Eq(p)$ as a discrete fibration in the category $(\Cc\downarrow B)$ in an obvious way. Then, by the above, we see that an essential inverse for $K^p\colon (\Cc\downarrow B)\to \Cc^{\downarrow Eq(p)}$, if it exists, must necessarily send a discrete fibration  $S\to Eq(p)$ to the coequaliser, in $(\Cc\downarrow B)$, of the projections of the equivalence relation $S$.

Before continuing, we recall that, for any monadic extension $p\colon E\to B$, the corresponding morphism $p\colon (E,p)\to (B,1_B)$ in $(\Cc\downarrow B)$ is a regular epimorphism. In fact, it follows immediately from this that $p$ is not only a regular epimorphism \emph{in $(\Cc\downarrow B)$}, but in \emph{any} $(\Cc\downarrow C)$ as soon as a morphism $g\colon B\to C$ exists in $\E$. More precisely, we have for any $g\colon B\to C$ in $\E$ that $p\colon (E,gp)\to (B,g)$ is a regular epimorphism in $(\Cc\downarrow C)$, simply because it is the image of $p\colon (E,p)\to (B,1_B)$ by the left adjoint functor $\Sigma_g\colon(\Cc\downarrow B)\to (\Cc\downarrow C)$.

\begin{lemma}\label{descentgoesup}
If the class $\E$ of morphisms in the category $\Cc$ satisfies conditions (E1)--(E5) as well as (M), then the class $\E^1$ of morphisms of $\Ext(\Cc)$ too satisfies condition (M). 
\end{lemma}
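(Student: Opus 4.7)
Write the $\E^1$-morphism $p\colon b\to c$ as a commutative square whose four components $b$, $c$, $p_0$, $p_1$ all lie in $\E$. By Lemma~\ref{extensionsgoup} the class $\E^1$ satisfies (E1)--(E5), and in particular pullbacks along $p$ exist in $\Ext(\Cc)$ and lie in $\E^1$; the pullback functor
\[
p^*\colon (\Ext(\Cc)\downarrow c)\longrightarrow (\Ext(\Cc)\downarrow b)
\]
is thus well-defined, with left adjoint given by post-composition with $p$. My plan is to verify Beck's monadicity criterion for $p^*$.

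Two structural observations drive the argument. First, by Proposition~\ref{propositiondext}, pullbacks along $\E^1$-morphisms in $\Ext(\Cc)$ are computed pointwise in $\Cc$, so that $p^*$ acts as $p_0^*$ on codomain components and as $p_1^*$ on domain components. Second, by (M) applied in $\Cc$, each $p_i$ is a monadic extension, so each $p_i^*$ reflects isomorphisms and creates coequalisers of $p_i^*$-split pairs. Reflection of isomorphisms by $p^*$ is then immediate: a morphism in $\Ext(\Cc)$ is an isomorphism if and only if both its components are; given that $p^*(\phi)$ is an isomorphism, both its components are isomorphisms by pointwise pullbacks, whence both components of $\phi$ are isomorphisms by $p_i^*$-reflection, and therefore so is $\phi$.

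The main technical step is the creation of coequalisers of $p^*$-split pairs. Given such a pair $f,g\colon x\rightrightarrows y$ in $(\Ext(\Cc)\downarrow c)$, both component pairs $(f_i,g_i)$ are $p_i^*$-split in $(\Cc\downarrow C_i)$, and hence admit coequalisers $q_i\colon y_i\to z_i$ preserved by $p_i^*$. These assemble to a commutative square $q\colon y\to z$ in $\Arr(\Cc)$ whose four structural arrows and both components lie in $\E$, thanks to (E3)--(E5) applied within $\E$. The main obstacle is to verify that the comparison map $z_1\to z_0\times_{C_0}C_1$ for $z$ itself lies in $\E$, so that $z\to c$ genuinely belongs to $(\Ext(\Cc)\downarrow c)$. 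This comparison map arises as the coequaliser of the analogous comparison maps attached to $x$ and $y$, which are in $\E$ by the $\E^1$-hypotheses on $x$, $y$ and the pullback-stability of $\E$; closure of $\E$ under coequalisers of such split pairs is exactly the kind of argument used in Lemma~\ref{extensionsgoup} to transfer (E3)--(E5) from $\E$ to $\E^1$. Granting this, $q$ is the coequaliser of $(f,g)$ in $(\Ext(\Cc)\downarrow c)$, preserved by $p^*$ componentwise, and Beck's theorem yields that $p^*$ is monadic.
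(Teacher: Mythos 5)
Your strategy---verifying Beck's criterion for $p^*$ by exploiting that pullbacks along $\E^1$-morphisms are pointwise and that each component $p_i$ is monadic in $\Cc$---is genuinely different from the paper's, which instead invokes Lemma~\ref{monadicdiscreet} and builds an essential inverse for the comparison functor of $(p_0,p_1)$ out of the essential inverses of the component comparison functors. Your opening steps (the left adjoint by composition with $p$, reflection of isomorphisms componentwise) are fine. The problem sits exactly where you locate ``the main obstacle'', and your resolution of it does not hold up. The comparison map $(a_z,z_1)\colon Z_1\to Z_0\times_{C_0}C_1$ of the assembled square $z$ is \emph{not}, in general, the coequaliser of the comparison maps of $x$ and $y$: that coequaliser, computed in $\Arr(\Cc)$, is taken componentwise, and its codomain component is the coequaliser of the pair $X_0\times_{C_0}C_1\rightrightarrows Y_0\times_{C_0}C_1$ obtained by pulling back $(f_0,g_0)$ along $c$. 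Identifying this with $Z_0\times_{C_0}C_1$ would require the pullback functor $-\times_{C_0}C_1$ to preserve the coequaliser $q_0$ of the $p_0^*$-split pair $(f_0,g_0)$; nothing in (E1)--(E5) or (M) grants this, since the pair need not remain split after this pullback and pullback functors do not preserve general coequalisers. The appeal to ``the kind of argument used in Lemma~\ref{extensionsgoup}'' does not close this: that lemma is quoted from elsewhere and concerns the transfer of (E1)--(E5), not closure of $\E$ under coequalisers.

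The conclusion you need, namely $(a_z,z_1)\in\E$, is nevertheless true and can be reached without that identification. First show $q_0\in\E$ by descending along $p_0$: writing $\pi,\pi'$ for the second projections of the pullbacks of $p_0$ along $y_0$ and $z_0$, one has $q_0\circ\pi=\pi'\circ p_0^*(q_0)$, where $\pi,\pi'\in\E$ by (E2) and $p_0^*(q_0)$ is a split epimorphism (the coequaliser of a split pair), hence in $\E$ by (E4); then (E3) and (E4) give $q_0\in\E$, and the same argument handles $q_1$ and the structural arrow $a_z$, which you also left unjustified. Next use the commutative square $(a_z,z_1)\circ q_1=(q_0\times_{C_0}1_{C_1})\circ(a_y,y_1)$: its right-hand side lies in $\E$ by Proposition~\ref{propositiondext} applied to $y$ together with (E2) and (E3), so (E4) yields $(a_z,z_1)\in\E$. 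With these repairs (plus a check that the componentwise factorisations assemble into morphisms of squares, using that $\Sigma_c(q_1)$ is still a coequaliser), your Beck-style proof does go through. The paper avoids forming coequalisers of arbitrary split pairs altogether: its only coequaliser is that of the kernel pair of a pullback of $p_1$, which is already known to be a monadic extension, and the rest is handled by the equivalence with categories of discrete fibrations.
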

\begin{proof}
By Lemma \ref{monadicdiscreet}, the result will follow if we can prove, for any $(p',p)\colon e\to b$ in $\Ec^1$, that the functor 
\[
K^{(p',p)}\colon (\Ext(\Cc)\downarrow b)\to \Ext(\Cc)^{\downarrow\Eq(p',p)}
\]
is a category equivalence. We shall do this by constructing an essential inverse for it.

Let us write $\pi_i\colon R\to E$ and $\pi'_i\colon R'\to E'$  ($i\in\{1,2\}$) for the kernel pair projections of $p$ and $p'$, respectively. Consider an arbitrary object  of $\Ext(\Cc)^{\downarrow\Eq(p',p)}$, depicted as the left part of the diagram
\[
%\xymatrix@!0@=2.5em{
\xymatrix{
& S \ar@<0.5 ex>[rr]^{\rho_1} \ar@<-0.5 ex>[rr]_{\rho_2} \ar@{.>}[dd] && C \ar[dd]_>>>>>>g \ar@{.>}[rr]^q && A \ar@{.>}[dd]^f \\
S'  \ar@<0.5 ex>[rr]^>>>>>>>>>{\rho_1'} \ar@<-0.5 ex>[rr]_>>>>>>>>>{\rho_2'}  \ar[dd] \ar[ru] && C' \ar[dd]_>>>>>>>{g'} \ar[ru] \ar@{.>}[rr]^>>>>>>{q'} &&A'\ar@{.>}[dd]^<<<<<<<{f'} \ar@{.>}[ru]_a &\\
& R \ar@<0.5 ex>@{.>}[rr]  \ar@<-0.5 ex>@{.>}[rr]  && E\ar[rr]_<<<<<<<<p && B\\
R' \ar@<0.5 ex>[rr]^{\pi_1'} \ar@<-0.5 ex>[rr]_{\pi_2'}  \ar@{.>}[ru] && E' \ar[ru]_e \ar[rr]_{p'} && B'\ar[ru]_b}
\]
Since $p$ and $p'$ are monadic extensions, the functors $K^p\colon (\Cc\downarrow B)\to \Cc^{\downarrow \Eq(p)}$ and $K^{p'}\colon (\Cc\downarrow B')\to \Cc^{\downarrow \Eq(p')}$ have essential inverses from which we obtain the morphisms $f$ and $f'$ in the diagram, both in $\E$, and then also morphisms $q$ and $q'$ in $\E$ such that the squares $fq=pg$ and $f' q'=p' g'$ are pullbacks. Since $q'$ is a monadic extension, the corresponding morphism $q'\colon (C',q')\to (A',1_{A'})$ in $(\Cc\downarrow A')$ is the coequaliser of its kernel pair $(\rho_1,\rho_2)\colon (S',q'\rho_1')\to (C',q')$. Hence, its image $q'\colon (C',bf'q')\to (A',bf')$  by the functor  $\Sigma_{bf'}\colon (\Cc\downarrow A')\to (\Cc\downarrow B)$ is the coequaliser of $(\rho_1,\rho_2)\colon (S',bf'q'\rho_1')\to (C',bf'q')$, from which we obtain the morphism $a$ completing the right hand part of the diagram. Since $aq'=qc$ lies in $\Ec$ as a composite of morphisms in $\E$, we also have $a\in\Ec$, by condition (E4). Moreover, since the class $\E^1$ of morphisms in $\Ext(\Cc)$ too satisfies conditions (E3) and (E4) (by Lemma \ref{extensionsgoup}), the square $bf'=fa$ lies in $\E^1$ because the squares $bp'=pe$ and $eg'=gc$ do so by assumption. Hence, the square $bf'=fa$ is an object of $(\Ext(\Cc)\downarrow b)$, and we have defined the essential inverse for $K^{(p',p)}$ on objects. 

Moreover, any morphism in $\Ext(\Cc)^{\downarrow\Eq(p',p)}$ induces, via the essential inverses for $K^p\colon (\Cc\downarrow B)\to \Cc^{\downarrow \Eq(p)}$ and  $K^{p'}\colon (\Cc\downarrow B')\to \Cc^{\downarrow \Eq(p')}$, morphisms in $(\Cc\downarrow B)$ and $(\Cc\downarrow B')$, and it is easily verified that these determine a morphism in $(\Ext(\Cc)\downarrow b)$ (by keeping in mind that $q'\colon (C',bf'q')\to (A',bf')$ is a regular epimorphism in $(\Cc\downarrow B)$, for arbitrary objects of  $\Ext(\Cc)^{\downarrow\Eq(p',p)}$ as in the above diagram).
\end{proof}

We still have to prove that if $\Gamma$ satisfies conditions (E4), (E5), (M) and (B), then $\Cov_{\Gamma}(\Cc)$ is a strongly $\E^1$-Birkhoff subcategory of $\Ext(\Cc)$. In order to obtain a left adjoint for  $H_1\colon \Cov_{\Gamma}(\Cc)\to \Ext(\Cc)$, we verify that the conditions of Theorem \ref{propositionreflectiveness} are satisfied:

\begin{proposition}\label{birkhoffimpliesadmissible}
If $\Gamma=(\Cc,\Xc,I,H,\eta,\E)$ satisfies conditions (E4), (E5), (M) and (B), then it is admissible.
\end{proposition}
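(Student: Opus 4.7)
Admissibility amounts to the counit $I^B H^B(X,\varphi)\to(X,\varphi)$ of the adjunction $I^B\dashv H^B$ being invertible for every $(X,\varphi)\in(\Xc\downarrow I(B))$. Writing $H^B(X,\varphi)=(A,f)$ via the defining pullback with top arrow $\pi\colon A\to HX$, this counit corresponds (under the identification $IH\cong\mathrm{id}_{\Xc}$ and fully-faithfulness of $H$) to the unique morphism $H(I\pi)\colon HI(A)\to HX$ satisfying $H(I\pi)\cdot\eta_A=\pi$. So the goal is to prove that $H(I\pi)$ is an isomorphism.

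The plan is to factor the defining pullback through the naturality square of $f$:
\[
\xymatrix{
A \ar[r]^-{\eta_A} \ar[d]_f & HI(A)\ar[d]^{HI(f)} \ar[r]^-{H(I\pi)} & HX \ar[d]^{H\varphi}\\
B\ar[r]_-{\eta_B}& HI(B) \ar@{=}[r] & HI(B)
}
\]
Since $f$ is obtained by pulling back $H\varphi\in\E$ along $\eta_B$, condition (E2) gives $f\in\E$; then (B) together with Proposition~\ref{propositiondext} places the comparison morphism $\chi\colon A\to P:=B\times_{HI(B)}HI(A)$ in $\E$ and shows in particular $\eta_A,\eta_B\in\E$. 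Pulling back $H(I\pi)$ along $\eta_B$ yields a map $\psi\colon P\to A$ (after identifying $A\cong B\times_{HI(B)}HX$), and a direct componentwise computation gives $\psi\cdot\chi=1_A$.

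The crux is then to deduce from $\chi\in\E$ and $\psi\chi=1_A$ that $\chi$ is itself invertible. Here hypothesis~(M) enters essentially: $\chi$ being a monadic extension, the functor $\chi^*\colon(\Cc\downarrow P)\to(\Cc\downarrow A)$ is conservative; and split monicity of $\chi$ trivializes its kernel pair, so that $\chi^*$ sends $\chi$ (viewed as a morphism to the terminal object of $(\Cc\downarrow P)$) to an identity in $(\Cc\downarrow A)$. Conservativity of $\chi^*$ then forces $\chi$ to be an isomorphism, and hence $\psi$ too.

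It remains to transfer invertibility from $\psi$ to $H(I\pi)$. Note that $\eta_B\in\E$ (apply (B) to $1_B\in\E$), so by (M) the pullback functor $\eta_B^*$ is again conservative; since $\psi=\eta_B^*(H(I\pi))$ is an isomorphism, so is $H(I\pi)$, as required. The main obstacle is the third step, promoting ``split monic in $\E$'' to ``iso''; this is the essential use of (M). Conditions (E4) and (E5) play no direct role here, being needed only for establishing the further properties of $\Gamma_1$ treated in the lemmas after this proposition.
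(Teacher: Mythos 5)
Your argument is correct and is essentially the paper's own proof with the adjunction formalism unpacked: your $\chi$ and $\psi$ are the unit $\eta^B_{H^B(X,\varphi)}$ and $H^B(\epsilon^B_\varphi)$, your identity $\psi\chi=1_A$ is the triangular identity, and the two uses of (M) --- conservativity of $\chi^*$ to promote the split mono $\chi\in\E$ to an isomorphism, and conservativity of $\eta_B^*$ to transfer invertibility from $\psi$ to the counit --- match the paper's ``the unit is a regular epi and a split mono, hence iso'' and ``$H^B$ reflects isomorphisms since $\eta_B^*$ is monadic and $H$ is fully faithful.'' Your closing remark that (E4) and (E5) are not needed here is also consistent with the paper, which invokes only (B) and (M).
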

\begin{proof}
As explained in the proof of Proposition 2.6 in \cite{EGV}, this is a consequence of the following two facts: for every morphism $f\colon A\to B$ in $\Ec$, the unit $\eta^B_f\colon (A,f)\to H^BI^B(A,f)$ is a regular epimorphism in the category $(\Cc\downarrow B)$ by conditions (B) and (M), and for every object $B\in\Cc$, the functor $H^B$ reflects isomorphisms because $H\colon \Xc\to \Cc$ is fully faithful and $\eta_B^*\colon (\Cc\downarrow HI(B))\to (\Cc\downarrow B)$ is monadic, once again by conditions (B) and (M). Indeed, for an arbitrary $(X,\varphi)\in (\Xc\downarrow I(B))$,  the triangular identity 
\[
H^B(\epsilon^B_{\varphi})\circ \eta^B_{H^B(X,\varphi)}=1_{H^B(X,\varphi)}
\] 
tells us that the (regular epic) adjunction unit  $\eta^B_{H^B(X,\varphi)}$ is a split monomorphism, hence an isomorphism. Consequently, $H^B(\epsilon_{\varphi}^B)$, and therefore also $\epsilon_{\varphi}^B$ is an isomorphism,  which proves that $H^B$ is fully faithful.
\end{proof}

Note that any commutative square (\ref{doubleext}) of morphisms in $\E$ may be considered as a commutative square in $(\Cc\downarrow B)$ in an obvious way. If the square is a pullback (in $(\Cc\downarrow B)$ or, equivalently, in $\Cc$) and all four of its morphisms are monadic extensions (hence, regular epimorphisms in $(\Cc\downarrow B)$) then it is necessarily also a pushout (in $(\Cc\downarrow B)$). Consequently, as soon as a square (\ref{doubleext}) of monadic extensions is in $\E^1$, it is a pushout square in $(\Cc\downarrow B)$. This will be useful to prove the following proposition:
\begin{proposition}\label{birkhoffimpliesregular}
If $\Gamma=(\Cc,\Xc,I,H,\eta,\E)$ satisfies conditions (E4), (E5), (M) and (B), then $I\colon \Cc\to \Xc$ preserves those pullbacks (\ref{pullback}) for which $f,g\in \Ec$ and $g$ is a split epimorphism.
\end{proposition}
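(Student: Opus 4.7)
Denote by $p_A\colon D\to A$ and $p_C\colon D\to C$ the pullback projections, let $s\colon B\to C$ be a section of $g$, and let $s'\colon A\to D$ be the induced section of $p_A$ satisfying $p_Cs'=sf$. The aim is to show that the canonical comparison morphism
\[
\phi\colon HI(D)\longrightarrow HI(A)\times_{HI(B)}HI(C)
\]
in $\Cc$ is an isomorphism; note that its target lies in $\Xc$ since $H$ preserves limits.

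My first task is to show $\phi\in\E$. Orienting the pullback square so that $p_A$ and $g$ are the horizontal arrows and $p_C,f$ the vertical ones, the sections $s',s$ together with the identity $p_Cs'=sf$ realise it as a split epimorphism in $\Ext(\Cc)$ of the form considered in (E5); hence the pullback square itself lies in $\E^1$. Applying $HI$ produces a commutative square of $\E$-morphisms (using $HI(\E)\subseteq\E$ from (G)) which remains a split epimorphism in $\Ext(\Cc)$, now via the sections $HI(s')$ and $HI(s)$. A second application of (E5) places this image in $\E^1$, which is precisely the statement that $\phi\in\E$.

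Next I exhibit the composite $\alpha:=\phi\circ\eta_D\colon D\to HI(A)\times_{HI(B)}HI(C)$ as a member of $\E$ by factoring it as
\[
D\longrightarrow A\times_{HI(B)}HI(C)\longrightarrow HI(A)\times_{HI(B)}HI(C),
\]
where the first arrow is the pullback along $f$ of the morphism $C\to B\times_{HI(B)}HI(C)$ coming from the Birkhoff square of $g$ (which is in $\E$ by (B)), and the second is the pullback of $\eta_A$ (in $\E$ by (B) applied to $1_A$) along the projection to $HI(A)$. By (E2) each factor lies in $\E$, so $\alpha\in\E$ by (E3).

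The main obstacle is then the last step: upgrading the conclusion $\phi\in\E$ to the statement that $\phi$ is an isomorphism. Since $\alpha\in\E$ is epic in all reasonable settings, and since $HI(A)\times_{HI(B)}HI(C)\in\Xc$, it is enough to show that $\alpha$ has the universal property of the reflection $\eta_D$, i.e.\ that every morphism $D\to X$ with $X\in\Xc$ factors uniquely through $\alpha$. My plan for establishing existence of such factorisations is to work in the slice $(\Cc\downarrow B)$, where $(D,h)=(A,f)\times(C,g)$ is a product with projection $(C,g)\to(B,1_B)$ split by $s$. Using the monadicity (M) of $g^\ast$ together with the admissibility of $\Gamma$ established in Proposition~\ref{birkhoffimpliesadmissible}, I intend to argue that the slice-level reflector $I^B\colon(\Cc\downarrow B)\to(\Xc\downarrow I(B))$ preserves this particular product, from which the universal property of $\alpha$---and therefore the invertibility of $\phi$---will follow. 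The delicate point is the preservation of this product by $I^B$, which is a left adjoint and hence in general preserves only colimits; it is precisely the combination of the splitting of $g$ with the strongly $\E$-Birkhoff condition (B) that makes this particular case go through.
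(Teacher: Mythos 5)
Your opening step is sound and is essentially the paper's: realising the pullback square as a split epimorphism in $\Ext(\Cc)$, applying (E5), applying $HI$ and applying (E5) again (the paper packages the two applications as membership of a cube in $\E^2=(\E^1)^1$) does show that the comparison $\phi\colon HI(D)\to HI(A)\times_{HI(B)}HI(C)$ lies in $\E$; the verification that $\alpha=\phi\comp\eta_D\in\E$ is also correct, though it turns out not to be needed. The difficulty is that the last step---the only place where anything remains to be proved---is not carried out. You announce a \emph{plan} to show that $\alpha$ has the universal property of $\eta_D$ by arguing that $I^B$ preserves the product $(A,f)\times(C,g)$ in $(\Cc\downarrow B)$; but that preservation statement is precisely the proposition transcribed into the slice over $B$, and you offer no argument for it beyond the assertion that the splitting of $g$ together with (B) ``makes this particular case go through.'' As written, the proof stops exactly where the real content begins, and the route you sketch is circular rather than merely incomplete.

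The missing idea is the observation the paper records just before the proposition: a square in $\E^1$ all of whose arrows are monadic extensions is automatically a \emph{pushout} in the slice over its bottom-right corner, because the comparison arrow to the pullback is a regular epimorphism there and a pullback square of regular epimorphisms is a pushout. Your two applications of (E5) in fact yield more than $\phi\in\E$: they show that the induced square with top $\eta_D\colon D\to HI(D)$, left the canonical isomorphism $D\to C\times_BA$, right $\phi$, and bottom the induced arrow $C\times_BA\to HI(C)\times_{HI(B)}HI(A)$, lies in $\E^1$. By (M) all its arrows are monadic extensions, so by the observation above it is a pushout in $(\Cc\downarrow HI(C)\times_{HI(B)}HI(A))$; and the pushout of an isomorphism is an isomorphism, whence $\phi$ is invertible and $I$ (hence $HI$, hence, $H$ being a full inclusion, $I$ itself) preserves the pullback. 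Without this (or some genuine substitute for your unproved slice-level claim), the argument does not establish the proposition.
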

\begin{proof}
Let us write $\Ext^2(\Cc)$ for the full subcategory of the ``double arrow'' category $\Arr^2(\Cc)=\Arr(\Arr(\Cc))$ determined by the class $\Ec^1$, and $\E^2$ for the class $(\E^1)^1$. By Lemma \ref{extensionsgoup}, first applied to the pair $(\Cc,\E)$ and next to the pair $(\Ext(\Cc),\Ec^1)$, we have that every split epimorphism in $\Ext^2(\Cc)$ lies in the class $\E^2$. 

Now, consider the diagram
\[
\xymatrix{
& HI(D) \ar@{.>}[dd] \ar@<.5 ex>[rr] && HI(A) \ar@<.5 ex>[ll]  \ar[dd]\\
D \ar[dd] \ar@<.5 ex>[rr] \ar[ru]^{\eta_D} && A \ar@<.5 ex>[ll] \ar[ru]_{\eta_A} \ar[dd]^<<<<<<<f &\\
& HI(C) \ar@<.5 ex>@{.>}[rr] && HI(B) \ar@{.>}@<.5 ex>[ll] \\
C \ar[ru]^{\eta_C} \ar@<.5 ex>[rr]^g && B \ar@<.5 ex>[ll] \ar[ru]_{\eta_B}&}
\]
in which the front square is a pullback for which $f,g\in \Ec$ and $g$ is a split epimorphism, and the back square is its image by the functor $HI\colon \Cc\to\Cc$. By the strong $\Ec$-Birkhoff property of $\Xc$, the left and right hand squares are in $\E^1$, hence we may consider the diagram as a split epimorphism in the category $\Ext^2(\Cc)$, whence a morphism in the class $\E^2$. It follows that the induced commutative square
\[
\xymatrix{
D \ar[r] \ar[d]_{\cong} & HI(D) \ar[d] \\
C\times_BA \ar[r] & HI(C)\times_{HI(B)}HI(A)}
\]
is in $\E^1$. In particular, it is a pushout in $(\Cc\downarrow HI(C)\times_{HI(B)}HI(A))$. Hence, the right hand vertical morphism is an isomorphism because the left hand one is so by assumption. 

This proves that the back square of the cube is a pullback. Since $H$ is a full inclusion, the result follows.
\end{proof}

Hence, we have that the conditions of Theorem \ref{propositionreflectiveness} hold as soon as  $\Gamma$ satisfies conditions (E4), (E5), (M) and (B), in which case $H_1\colon \Cov_{\Gamma}(\Cc)\to \Ext(\Cc)$ admits a left adjoint $I_1\colon \Ext(\Cc)\to \Cov_{\Gamma}(\Cc)$. We still have to prove that the square (\ref{birkhofsquare}) is in $\E^1$, for every $f\colon A\to B$ in $\E$. For this, we first show that every reflection unit is in $\E^1$. In fact, for this weaker assumptions on $\Gamma$ suffice:

\begin{lemma}\label{halfbirkhoff}
If  $\Gamma=(\Cc,\Xc,I,H,\eta,\E)$ satisfies the assumptions of Theorem \ref{propositionreflectiveness} as well as conditions (E4) and (E5), and if every reflection unit $\eta_A\colon A\to HI(A)$ is in $\E$, then every reflection unit $\eta^1_f\colon f\to I_1(f)$ is in $\E^1$.
\end{lemma}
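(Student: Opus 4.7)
By Proposition~\ref{propositiondext}, the square $\eta^1_f = (q_f, 1_B)\colon f \to \bar f$ lies in $\E^1$ if and only if each of $f,\bar f,q_f,1_B$ belongs to $\E$ and the induced morphism to the pullback of $\bar f$ along $1_B$ is in $\E$. Three of these arrows are already in $\E$ by hypothesis, while the pullback of $\bar f$ along $1_B$ is simply $\bar A$ with induced morphism $q_f$. Thus the whole statement reduces to showing $q_f \in \E$.

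The plan is to read off $q_f$, up to postcomposition with a morphism in $\E$, from the construction of $\bar f$ displayed in diagram~(\ref{bigdiagram}) and to apply (E4). There the object $P_0$ appears simultaneously as the pullback $A\times_{HI(A)}HI(A\times_B A)$ of $\eta_A$ along $HI(\pi_1)$ and, via the equivalence $\Spl(A,f)\simeq\Cc^{\downarrow_t Eq(f)}$, as $A\times_B\bar A$. Under this identification the unit $u=(\pi_1,\eta_{A\times_B A})\colon A\times_B A\to P_0$ of the adjunction $H^A\dashv I^A$ at $(A\times_B A,\pi_1)$ corresponds to the object-level component of $K^f(q_f)$, namely $(a_1,a_2)\mapsto(a_1,q_f(a_2))$. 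Composing with the projection $p\colon P_0 = A\times_B\bar A\to\bar A$, which is a pullback of $f$ and so lies in $\E$, yields the key identity $p\circ u = q_f\circ\pi_2$. Granting $u\in\E$, condition (E3) gives $p\circ u\in\E$, and then (E4) applied to $q_f\circ\pi_2$ produces $q_f\in\E$, completing the reduction.

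What remains is to prove $u\in\E$, and this is where condition (E5) enters. Consider the naturality square
\[
\xymatrix@C=3em{
A\times_B A \ar[r]^{\pi_1}\ar[d]_{\eta_{A\times_B A}} & A \ar[d]^{\eta_A} \\
HI(A\times_B A) \ar[r]_{HI(\pi_1)} & HI(A)
}
\]
as a morphism in $\Ext(\Cc)$: both vertical arrows belong to $\E$ by hypothesis, the diagonal $\Delta\colon A\to A\times_B A$ splits $\pi_1$, its image $HI(\Delta)$ splits $HI(\pi_1)$, and the two compatibility identities $\eta_A\pi_1 = HI(\pi_1)\eta_{A\times_B A}$ and $\eta_{A\times_B A}\Delta = HI(\Delta)\eta_A$ are both instances of naturality of $\eta$. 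Condition (E5) thus places the square in $\E^1$, and Proposition~\ref{propositiondext} identifies the induced morphism to the pullback $P_0$ as $u$, so $u\in\E$. The main point requiring careful verification is the identification $p\circ u = q_f\circ\pi_2$, which rests on tracing through the equivalence $K^f$ and the uniqueness of the pullback factorisations in diagram~(\ref{bigdiagram}); once granted, the remainder is a short combination of (E3), (E4) and (E5).
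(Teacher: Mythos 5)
Your proposal is correct and takes essentially the same route as the paper's own proof: both reduce the claim to showing that the top component $A\to\bar A$ of $\eta^1_f$ lies in $\E$, apply condition (E5) (together with the hypothesis that the units are in $\E$) to the naturality square of $\eta$ at the split epimorphism $\pi_1\colon A\times_BA\to A$ to conclude that the induced morphism $q=u$ into $P_0$ is in $\E$, and then finish via the identity $p\circ q=\eta^1_f\circ\pi_2$ read off from Diagram~(\ref{bigdiagram}) using (E2), (E3) and (E4). The only difference is presentational: you spell out the diagonal splitting needed for (E5) and the identification $P_0\cong A\times_B\bar A$ a little more explicitly than the paper does.
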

\begin{proof}
Consider a morphism $f\colon A\to B$ in $\E$, and Diagram (\ref{bigdiagram}), where $\bar{f}=I_1(f)$. We must prove that the commutative square
\[
\xymatrix{
A \ar[r]^{\eta^1_f} \ar[d]_f & \bar{A} \ar[d]^{\bar{f}}\\
B \ar@{=}[r] & B
}
\]
is in $\Ec^1$, for which it clearly suffices to show that $\eta^1_f\colon A\to \bar{A}$ is in $\E$. Note that the square
\[
\xymatrix{
A\times_BA \ar[r]^-{\pi_1} \ar[d]_{\eta_{A\times_BA}} & A \ar[d]^{\eta_A}\\
HI(A\times_BA) \ar[r]_-{HI(\pi_1)} & HI(A)}
\]
is in $\E^1$ by condition (E5) and the assumption that the reflection units are in $\E$. This means, in particular, that in the subdiagram
\[
\xymatrix{
A\times_BA \ar[d]_{\pi_2} \ar[r]^-q & P_0 \ar[d]_p \ar[r]^{\bar{\pi}_1} & A \ar[d]^f  \\
A \ar[r]_{\eta^1_f} & \bar{A} \ar[r]_{\bar{f}} & B}
\]
of Diagram (\ref{bigdiagram}) we have that $q\in\E$. Since the right hand square is a pullback, $f\in\E$ implies $p\in\E$, and we can conclude from Conditions (E3) and (E4) that also $\eta^1_f\colon A\to \bar{A}$ lies in $\E$, as desired.
\end{proof}

\begin{lemma}\label{birkhoffgoesup}
If  $\Gamma=(\Cc,\Xc,I,H,\eta,\E)$ satisfies conditions (E4), (E5), (M) and (B), then $\Gamma_1=(\Ext(\Cc),\Cov_{\Gamma}(\Cc),I_1,H_1,\eta^1,\E^1)$ satisfies (B): $\Cov_{\Gamma}(\Cc)$ is a strongly $\E^1$-Birkhoff subcategory of $\Ext(\Cc)$.
\end{lemma}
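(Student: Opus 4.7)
The plan is to fix a morphism $\varphi=(f',f)\colon a\to b$ in $\Ec^1$ and show that the naturality square
\[
\xymatrix{
a \ar[r]^-{\eta^1_a} \ar[d]_-\varphi & \bar{a} \ar[d]^-{I_1(\varphi)}\\
b \ar[r]_-{\eta^1_b} & \bar{b}
}
\]
lies in $\Ec^2=(\Ec^1)^1$. Since Lemma~\ref{extensionsgoup} equips the pair $(\Ext(\Cc),\Ec^1)$ with conditions (E1)--(E5), Proposition~\ref{propositiondext} is available one level up: it suffices to check that the four edges of the square lie in $\Ec^1$ and that the induced morphism in $\Ext(\Cc)$ to the pullback of $I_1(\varphi)$ along $\eta^1_b$ also lies in $\Ec^1$.

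Three of the four edges are handled directly: $\varphi\in\Ec^1$ by hypothesis, while $\eta^1_a,\eta^1_b$ lie in $\Ec^1$ by Lemma~\ref{halfbirkhoff}, whose hypotheses are in force because $\Gamma$ is admissible (Proposition~\ref{birkhoffimpliesadmissible}) and because (B) applied to identity arrows forces each $\eta_A$ to be in $\Ec$. For the fourth edge $I_1(\varphi)=(\bar{f'},f)$, the identity $\bar{f'}\circ\eta^1_a=\eta^1_b\circ f'\in\Ec$ together with (E4) yields $\bar{f'}\in\Ec$, and the induced map $(\bar{a},\bar{f'})\colon\bar{A'}\to A\times_B\bar{B'}$ is shown to be in $\Ec$ via the factorisation
\[
(\bar{a},\bar{f'})\circ\eta^1_a \;=\; f^*(\eta^1_b)\circ(a,f'),
\]
combined with $(a,f')\in\Ec$ (which encodes $\varphi\in\Ec^1$), $f^*(\eta^1_b)\in\Ec$ (by (E2)), and (E3)--(E4).

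Pullbacks in $\Ext(\Cc)$ being pointwise, the remaining condition reduces to showing that the square
\[
\xymatrix{
A' \ar[r]^-{(\eta^1_a,f')} \ar[d]_-a & \bar{A'}\times_{\bar{B'}}B' \ar[d]^-{\bar{a}\pi_1}\\
A \ar@{=}[r] & A
}
\]
in $\Cc$ lies in $\Ec^1$. Its right leg is in $\Ec$ (composite of $\bar{a}\in\Ec$ with $\pi_1$, a pullback of $\eta^1_b\in\Ec$), and its bottom is an identity; so membership in $\Ec^1$ is equivalent to the single claim that $(\eta^1_a,f')\colon A'\to\bar{A'}\times_{\bar{B'}}B'$ lies in $\Ec$, or equivalently that the ``top face'' of the associated $3$-cube is itself in $\Ec^1$.

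This last verification is the main obstacle, and it is where the full force of $\varphi\in\Ec^1$ comes in. My plan is to exploit the explicit construction of $\bar{A'}$ and $\bar{B'}$ supplied by Diagram~\ref{bigdiagram}: applied to both $a$ and $b$, it realises each reflection as a quotient of an intermediate pullback $P_0^{(\cdot)}$, furnished with $\Ec$-maps $q^{(\cdot)}$ from the kernel pair, $\Ec$-maps $p^{(\cdot)}$ onto the reflection, and diagonal sections $s^{(\cdot)}$ with $\eta^1_{(\cdot)} = p^{(\cdot)}s^{(\cdot)}$, and naturality in $\varphi$ provides a comparison $P_0^a\to P_0^b$ compatible with all of these. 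From $\varphi\in\Ec^1$ one extracts that the induced map on kernel pairs $f'\times_f f'\colon A'\times_A A'\to B'\times_B B'$ lies in $\Ec$, using condition (E5) on the relevant split-epi squares (via the diagonals) in $\Ext(\Cc)$; condition (B) applied to $f'$ and to $f'\times_f f'$—together with preservation of $\Ec$ by $HI$—then enables a diagram chase through the construction that yields the desired $\Ec$-membership of $(\eta^1_a,f')$, completing the argument.
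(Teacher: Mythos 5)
Your reduction is sound as far as it goes: decomposing membership of the naturality square in $\Ec^2$ into the four edges plus the comparison to the pullback (via Proposition~\ref{propositiondext} one level up, legitimised by Lemma~\ref{extensionsgoup}), handling $\varphi$, $\eta^1_a$, $\eta^1_b$ and $I_1(\varphi)$, and boiling everything down to the single claim that $(\eta^1_a,f')\colon A'\to\overline{A'}\times_{\overline{B'}}B'$ lies in $\Ec$ --- all of this is correct (for the edge $I_1(\varphi)$ you could simply invoke (E4) for $\Ec^1$ applied to $I_1(\varphi)\circ\eta^1_a=\eta^1_b\circ\varphi$). But that single remaining claim \emph{is} the lemma: it is exactly the statement that the top face of the cube is in $\Ec^1$, and your proposal does not prove it. The paragraph offered in its place is a plan, not an argument. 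Moreover, its first ingredient is shaky as stated: to get $f'\times_f f'\colon A'\times_AA'\to B'\times_BB'$ in $\Ec$ from (E5) you would need to exhibit a split epimorphism in $\Ext(\Cc)$, but the natural candidate (second projections split by the diagonals) has $f'\times_f f'$ as one of its $\Ec$-legs, which is precisely what is to be shown; one obtains this fact instead from (E2)--(E4) by factoring through the comparison $A'\to A\times_BB'$. Even granting it, no indication is given of how condition (B) and the $P_0$-construction of Diagram~(\ref{bigdiagram}) actually combine to yield $(\eta^1_a,f')\in\Ec$, so the proof is incomplete at its crucial point.

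For comparison, the paper sidesteps this component-level chase entirely. It introduces the auxiliary Galois structure $\bar{\Gamma}=(\Ext(\Cc),\Ext(\Xc),\bar{H},\bar{I},\bar{\eta},\Ec^1)$, checks that it satisfies the hypotheses of Lemma~\ref{halfbirkhoff} (conditions (E4), (E5), (M) for $\Ec^1$ come from Lemmas \ref{extensionsgoup} and \ref{descentgoesup}; the units $\bar{\eta}_a$ lie in $\Ec^1$ by condition (B) for $\Gamma$; admissibility and pullback-preservation are pointwise), and observes that $\bar{I}_1$ is pointwise application of $I_1$, so that the reflection unit $\bar{\eta}^1_\varphi$ is literally the naturality square you are trying to place in $\Ec^2$. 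Lemma~\ref{halfbirkhoff} applied to $\bar{\Gamma}$ then gives the conclusion at once; in effect it runs your intended diagram chase once and for all in $\Ext(\Cc)$ rather than in $\Cc$. If you want to keep your direct approach, the missing step amounts to reproducing that argument with $\varphi\times_b\varphi$ in place of $A\times_BA$, using (E5) for $\Ec^1$ on the split epimorphism $\pi_1\colon\varphi\times_b\varphi\to\varphi$ in $\Ext(\Ext(\Cc))$ and (E4) for $\Ec^1$ to conclude.
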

\begin{proof}
Let us write $\Ext(\Xc)$ for the full subcategory of the arrow category $\Arr(\Xc)$ determined by the morphisms $\varphi\colon X\to Y$ in $\Xc$ for which $H(\varphi)\in\E$, and $\bar{H}$ for the obvious extension of $H\colon \Xc\to \Cc$ to $\Ext(\Xc)\to \Ext(\Cc)$. Since $HI(\E)\subseteq \E$, we also have that $I\colon \Cc\to\Xc$ extends to a functor $\bar{I}\colon \Ext(\Cc)\to \Ext(\Xc)$. Clearly, $\bar{H}$ is fully faithful just as $H$, and $\bar{I}$ is left adjoint to $\bar{H}$. We moreover have that $\bar{H}\bar{I}(\E^1)\subseteq\E^1$: indeed, consider  for any morphism  $a\to b$ in the class $\Ec^1$ the commutative square
\[
\xymatrix{
a \ar[r]^-{\bar{\eta}_a} \ar[d] & \bar{H}\bar{I}(a) \ar[d]\\
b \ar[r]_-{\bar{\eta}_b}& \bar{H}\bar{I}(b)}
\]
induced by the reflection unit; since from Lemma \ref{extensionsgoup} we know that the class $\E^1$ satisfies conditions (E3) and (E4), we need only note that $\bar{\eta}_b\in\E^1$ by the strong $\E$-Birkhoff property of $\Xc$.

Hence, we have a Galois structure $\bar{\Gamma}=(\Ext(\Cc),\Ext(\Xc),\bar{H},\bar{I},\bar{\eta},\E^1)$ which, moreover, satisfies the assumptions of Lemma \ref{halfbirkhoff}: indeed, from Lemmas \ref{extensionsgoup} and \ref{descentgoesup} we know that conditions (E4), (E5) and (M) are satisfied, and since pullbacks in $\Ext(\Cc)$ along morphisms in $\E^1$ are pointwise pulbacks in $\Cc$, $\bar{\Gamma}$ is easily verified to satisfy also the remaining assumptions. Furthermore, we clearly have that a morphism $(f',f)\colon a\to b$ in the class $\E^1$ is a normal extension (=covering) with respect to $\bar{\Gamma}$ if, and only if, both $f$ and $f'$ are normal extensions (=coverings) with respect to $\Gamma$, and the left adjoint $\bar{I}_1\colon \Ext(\Ext(\Cc))\to \Cov_{\bar{\Gamma}}(\Ext(\Cc))$ to the inclusion functor $\bar{H}_1\colon  \Cov_{\bar{\Gamma}}(\Ext(\Cc))\to  \Ext(\Ext(\Cc))$ is obtained simply by pointwise application of $I_1\colon \Ext(\Cc)\to \Cov_{\Gamma}(\Cc)$.

Applying Lemma  \ref{halfbirkhoff} to  $\bar{\Gamma}$ now yields the result.
\end{proof}

Finally, by combining Lemma's \ref{extensionsgoup}, \ref{descentgoesup} and \ref{birkhoffgoesup}, Propositions \ref{birkhoffimpliesadmissible} and  \ref{birkhoffimpliesregular}, and Theorem \ref{propositionreflectiveness}, we obtain our main result:

\begin{theorem}\label{maintheorem}
For every Galois structure  $\Gamma=(\Cc,\Xc,I,H,\eta,\E)$ satisfying conditions (E4), (E5), (M) and (B) (and (E1)--(E3) and (G)), there exists a Galois structure $\Gamma_1=(\Cc_1,\Xc_1,I_1,H_1,\eta^1,\Ec^1)$, once again satisfying these conditions, such that $\Cc_1$ is the full subcategory of $\Arr(\Cc)$ corresponding to $\Ec$, $\Xc_1$ is its full subcategory consisting exactly of the coverings with respect to $\Gamma$, and  $\Ec^1$ is as in Proposition \ref{propositiondext}.
 
Hence, by induction,  there exists for every $n\geq 1$ a Galois structure $\Gamma_n=(\Cc_n,\Xc_n,H_n,I_n,\eta^n,\Ec^n)$ such that $\Cc_n$ is the full subcategory of the ``$n$-fold arrow'' category $\Arr^n(\Cc)=\Arr(\Arr^{n-1}(\Cc))$ corresponding to $\Ec^{n-1}$, $\Xc_n$ is its full subcategory consisting exactly of the coverings with respect to $\Gamma_{n-1}$, and  $\E^n=(\E^{n-1})^1$.  (Here we assumed that $\Cc_0=\Cc$, $\Xc_0=\Xc$, $\E^0=\E$ and $\Arr^0(\Cc)=\Cc$.) 

Moreover, we have for each $\Gamma_n$ that every morphism in $\E^n$ is a monadic extension and that every covering is a normal extension. 
\end{theorem}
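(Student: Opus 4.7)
The proof is essentially a packaging of the preceding lemmas and propositions into an induction. First, I would verify that $\Gamma_1 = (\Ext(\Cc), \Cov_{\Gamma}(\Cc), I_1, H_1, \eta^1, \E^1)$ is a well-defined Galois structure. Conditions (E1)--(E3) for $\E^1$ are part of Lemma \ref{extensionsgoup}; admissibility of $\Gamma$ is provided by Proposition \ref{birkhoffimpliesadmissible}, and combined with Proposition \ref{birkhoffimpliesregular} this places us in the setting of Theorem \ref{propositionreflectiveness}, which simultaneously yields the left adjoint $I_1 \colon \Ext(\Cc) \to \Cov_{\Gamma}(\Cc)$ with unit $\eta^1$ and the conclusion that every covering with respect to $\Gamma$ is a normal extension. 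It then remains to check condition (G) for $\Gamma_1$, namely $H_1 I_1(\E^1) \subseteq \E^1$; this is a direct consequence of Lemma \ref{birkhoffgoesup}, since the strong $\E^1$-Birkhoff property asserts that for $a \in \E^1$ the full reflection-unit square $\eta^1_a \colon a \to H_1 I_1(a)$ lies in $(\E^1)^1$, and in particular its target $H_1 I_1(a)$ is again in $\E^1$.

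Next, I would verify that $\Gamma_1$ inherits the full list of hypotheses (E4), (E5), (M) and (B). Conditions (E4) and (E5) come from Lemma \ref{extensionsgoup} applied to the pair $(\Ext(\Cc), \E^1)$; condition (M) is Lemma \ref{descentgoesup}; and condition (B) is Lemma \ref{birkhoffgoesup} itself. With all hypotheses inherited, the induction step is exactly the base case applied to $\Gamma_1$ in place of $\Gamma$: from $\Gamma_{n-1}$ satisfying the same list one obtains $\Gamma_n$, together with the identifications $\Cc_n = \Ext(\Cc_{n-1})$, $\Xc_n = \Cov_{\Gamma_{n-1}}(\Cc_{n-1})$ and $\E^n = (\E^{n-1})^1$ claimed in the statement.

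Finally, the two closing assertions are automatic at every level: that every morphism of $\E^n$ is a monadic extension is precisely condition (M) for $\Gamma_n$, transported along the induction by Lemma \ref{descentgoesup}, and that every covering with respect to $\Gamma_n$ is a normal extension is part of the conclusion of Theorem \ref{propositionreflectiveness}, applicable at each level because $\Gamma_n$ satisfies the same hypotheses as $\Gamma$. I do not expect a genuine obstacle here; all the technical work lies in the lemmas already proved, and the only point requiring a little care is the bookkeeping needed to read Lemma \ref{birkhoffgoesup} as delivering both (B) \emph{and} (G) at the next stage, via the observation that any element of $(\E^{n-1})^1$ has both its source and its target inside $\E^{n-1}$.
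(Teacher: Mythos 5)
Your proposal is correct and follows exactly the paper's own argument, which likewise just assembles Lemma \ref{extensionsgoup}, Lemma \ref{descentgoesup}, Lemma \ref{birkhoffgoesup}, Propositions \ref{birkhoffimpliesadmissible} and \ref{birkhoffimpliesregular}, and Theorem \ref{propositionreflectiveness} into an induction. Your extra remark that (G) for $\Gamma_1$ is extracted from the strong $\E^1$-Birkhoff property (since a square in $(\E^1)^1$ has all its sides, in particular $H_1I_1(a)$, in $\E^1$) is a correct piece of bookkeeping that the paper leaves implicit.
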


%\begin{proof}
%We must prove that $I$ preserves every pullback of the form (\ref{admissiblesquare}) with $X\in\Xc$ and $\phi\in \E$. Since $X\in\Xc$, the morphism $P\to X$ factors over the reflection unit $\eta_P$ and we obtain a commutative diagram 
%\[
%\xymatrix{
%P \ar[d] \ar[r]^-{\eta_P} & I(P) \ar[d] \ar[r] & X \ar[d]^{\phi} \\
%B\ar[r]_-{\eta_{B}} & I(B) \ar@{=}[r] & I(B)}
%\]
%We must prove that the right hand square is a pullback.
%\end{proof}

 \begin{remark}
Note that  a chain of Galois structures $\Gamma_n=(\Cc_n,\Xc_n,H_n,I_n,\eta^n,\Ec^n)$  may exist,  with  $\Cc_n=\Ext(\Cc_{n-1})$, $\E^n=(\E^{n-1})^1$ and where $\Xc_n$ consists of the coverings with respect to $\Gamma_{n-1}$,   even if the assumptions of Theorem \ref{maintheorem} are not satisfied.
To see this, take $\Cc$ arbitrary, put $\Xc=\Cc$ and choose $\Ec$ to be the class of all morphisms in $\Cc$. Then condition (M) is not satisfied, but  
  the existence of the Galois structures $\Gamma_n$ is readily observed. Note that a covering with respect to $\Gamma_n$ is simply an $n+1$-fold morphism.
 
Or, we may choose  $\Ec$ to be the class of regular epimorphisms in, say, an exact category $\Cc$, in which case condition (M) \emph{is} satisfied, and still take $\Xc=\Cc$. In this case, we do not only have the existence of the Galois structures $\Gamma_n$, but the last part of the theorem is satisfied as well: every morphism in $\E^n$ is a monadic extension and the classes of  coverings and of  normal extensions with respect to $\Gamma_n$  coincide (and, in fact, coincide with $\E^n$, as well as with the class of trivial coverings). However, condition (E5) fails for any  $\Cc$ which is not Mal'tsev.
%
%This suggests it might perhaps be possible to weaken the conditions of the theorem further still. However, it seems unlikely one would find a single list of axioms which is both sufficient and necessary for the existence of a chain of ``higher order'' Galois structures to exist,  as it seems already problematic to find sufficient and necessary conditions for the ``reflectiveness of coverings'' as explained in  \cite{Janelidze-Kelly:reflectiveness}.
%
%
%since this is already problematic 
%
%
% Since it seems already problematic to find a single list of axioms which is both sufficient and necessary for the   ``reflectiveness of coverings'' as explained in \cite{Janelidze-Kelly:reflectiveness}, it seems unlikely one could find this for the existence of such a chain of ``higher order'' Galois structures
%  
% one is unlikely to find a single list of axioms which is both sufficient and necessary for such a chain of ``higher order'' Galois structures to exist
% 
%
% 
% 
 \end{remark}

%
%\bibliography{tim}
%\bibliographystyle{amsplain}
%% .bbl:

\end{document}